\theoremstyle{plain}
\newtheorem{theorem}{Theorem}[section]
\newtheorem{proposition}[theorem]{Proposition}
\newtheorem{lemma}[theorem]{Lemma}
\newtheorem{corollary}[theorem]{Corollary}
\newtheorem{theorem*}{Theorem}[]
\theoremstyle{definition}
\newtheorem{example}[theorem]{Example}
\theoremstyle{remark}
\newtheorem{remark}[theorem]{Remark}
\newcommand{\secref}[1]{Section~\ref{#1}}
\newcommand{\thmref}[1]{Theorem~\ref{#1}}
\newcommand{\propref}[1]{Proposition~\ref{#1}}
\newcommand{\corref}[1]{Corollary~\ref{#1}}
\newcommand{\exref}[1]{Example~\ref{#1}}
\newcommand{\remref}[1]{Remark~\ref{#1}}
\def\lbr{{\big[\![}}
\def\rbr{{]\!\big]}}
\def\:{{\colon}}
\def\WL{\mathrm{WL}}
\def\Q{{\mathbb Q}}
\def\L{{\mathbb L}}
\def\map{\mathrm{map}}
\def\Cl{\mathrm{cl}}
\def\Der{\mathrm{Der}}
\def\Hnil{\mathrm{Hnil}}
\def\Rel{\mathrm{Rel}}
\def\cat0{\mathrm{cat}_0}
\def\ad{\mathrm{ad}}
\def\induced{(\zeta_{a} \mid \zeta_{b})_{\!  \psi}}
\def\inducedddd{(\zeta_{a} \mid \zeta_{b} \mid \zeta_c)_{\!  \psi}}
\def\inducedd{(\zeta_{a} \mid \zeta_{b})_{\!  \psi *}}
\def\induceddd{(\zeta^*_{a} \mid \zeta^*_{b})_{\! \mathcal{L}_f}}
\def\cat{{\mathrm {cat}}}
\def\QL{\mbox{\boldmath$\mathscr{L}$} }
\begin{document}

\title[Whitehead Products in Function Spaces]
{Whitehead Products in Function Spaces: Quillen Model Formulae}
\author{Gregory  Lupton}
\address{Department of Mathematics,
      Cleveland State University,
      Cleveland OH 44115 U.S.A.}
\email{G.Lupton@csuohio.edu}
\author{Samuel Bruce Smith}
\address{Department of Mathematics,
  Saint Joseph's University,
  Philadelphia, PA 19131 U.S.A.}
\email{smith@sju.edu}
\date{\today}
\keywords{Whitehead product, Function Space, Quillen minimal model,
derivation, coformal space, Whitehead length}
\subjclass[2000]{55P62, 55Q15}

\begin{abstract}  We study
Whitehead products in the rational homotopy groups of a general
component of a function space.  For the component of any based map
$f \colon X \to Y$, in either the based or free function space, our
main results express the Whitehead product directly in terms of the
Quillen minimal model of $f$.  These results follow from a purely
algebraic development in the setting of chain complexes of
derivations of differential graded Lie algebras, which is of
interest in its own right.  We apply the results to study the
Whitehead length of function space components.
\end{abstract}

\maketitle

\section{Introduction}
Let $f \colon X \to Y$ be a based map of based, simply connected CW
complexes with $X$ a finite complex. Let $\map(X, Y;f)$ denote the
path component containing $f$  in the space of basepoint-free
continuous functions from  $X$ to $Y,$ and $\map_{*}(X, Y;f)$ the
component in the space of basepoint-preserving functions. In this
paper, we study the structure of the Whitehead product on the
rational homotopy groups  of these function spaces.

The paper is organized as follows.  In \secref{sec:Gamma}, we
describe the Quillen model of the map
$$ \eta \times 1 \colon S^{p+q-1} \times X \to  (S^{p}\vee S^{q}) \times X $$
where $\eta$ is the Whitehead product. Our description is given in
the framework of chain complexes of generalized derivations of
Quillen models, which was introduced  in \cite{L-S2} in order to
identify the rational homotopy groups of function space components.
\secref{sec:algebra} is a purely algebraic development in the
setting of chain complexes that arise in the category of
differential graded (DG) Lie algebras. Using the form of the Quillen
model of $\eta \times 1$ as a guide, we construct a ``Whitehead
product" on the homology of the mapping cone of a map of DG Lie
algebras.  We extend our construction to the chain complexes of
generalized derivations mentioned above. In \secref{sec:iterated},
we record a detailed formula useful for applications, and mention
briefly some extensions, such as iterated products. In
\secref{sec:formula}, we return to the topological setting and prove
our main result:  we identify Whitehead products in the rational
homotopy groups of $\map(X, Y;f)$ and $\map_{*}(X, Y;f)$ with the
``Whitehead products" constructed algebraically from the Quillen
model of the map.

We present various applications in \secref{sec:applications}, where
we study the rational Whitehead length of function space components.
Given a space $Z,$ let $\WL(Z)$, the \emph{Whitehead length of $Z$},
denote the length of longest, non-zero iterated Whitehead bracket in
$\pi_{\geq2}(Z).$ (We avoid considerations of the fundamental group
throughout this paper.) Thus  $\WL(Z) = 1$ means all Whitehead
products vanish and $\WL(Z) \geq 2$ means that there exists a
non-trivial Whitehead product.  Let $\WL_\Q(Z)$, the \emph{rational
Whitehead length of $Z$}, denote the length of longest, non-zero
iterated Whitehead bracket in $\pi_{\geq2}(Z) \otimes \Q$. We first
observe that, for the null component of a function space, we have
$\WL_\Q(\map(X, Y; 0)) = \WL_\Q(Y)$ as a consequence of classical
ideas (\thmref{thm:WL null component}). Using our formula, we then
prove that, for any map $f \colon X \to Y,$ that is, for a general
component, we have
$$\max\{\WL_\Q(\map_*(X, Y;f)), \WL_\Q(\map(X, Y;f)) \} \leq \WL_\Q(Y)$$
provided $Y$ is a coformal space (\thmref{thm:Y coformal}). Focusing
on the based function space, we also prove that  $$ \WL_\Q(\map_*(X,
Y;f)) \leq \Cl_0(X),$$ where $\Cl_0(X)$ denotes the rational
cone-length of $X$ (\thmref{thm:cone}) complementing the
corresponding (integral) result at the null component due to Ganea
\cite{Gan60}. In \thmref{thm:sphere}, we  apply our formulae to give
a complete calculation of the rational Whitehead length of all
components of $\map(X, S^n)$ and of $\map_*(X, S^n)$ for $X$ a
finite, simply connected CW complex. Finally, we show that the
inequality
$$\WL_\Q(\map(X, Y;f)) > \WL_\Q(\map(X, Y; 0)) =  \WL_\Q(Y)$$ may hold.
Precisely, in \exref{ex:nonabelian}, we give a space $Y$ with
vanishing rational Whitehead products and a map $f \colon S^3 \to Y$
such that $\WL_\Q(\map(S^3, Y; f)) \geq 2.$

We assume familiarity with rational homotopy theory from Quillen's
point of view.  Our main reference for this material is \cite{F-H-T}
(see also \cite{Q, Tan}).  We introduce notation as we go but recall
here that a map $f \colon X \to Y$ of simply connected CW complexes
of finite type has a {\em Quillen minimal model} which is a map
$\mathcal{L}_f \colon (\mathcal{L}_X, d_X) \to (\mathcal{L}_Y, d_Y)$
of connected DG Lie algebras over $\Q$. The Quillen minimal model of
$f$ is a complete invariant of the rationalization of $f$. In
particular, there is a  natural isomorphism $H_*(\mathcal{L}_X, d_X)
\cong \pi_*(\Omega X) \otimes \Q$ of graded Lie algebras. The map
induced by $f$ on rational homotopy Lie algebras corresponds, with
these identifications for $X$ and $Y,$ to  the map induced by
$\mathcal{L}_f$ on homology. Our main results explain how the
Whitehead product in the rational homotopy groups of $\map(X,Y;f)$
and $\map_*(X, Y;f)$ depends on $\mathcal{L}_f.$

\begin{remark}
Rational Whitehead products for  function spaces have been studied
by several authors. In \cite{V}, Vigu\'{e}-Poirrier gave an elegant
formula for Whitehead products in the null-components $\map_*(X,
Y;0)$ and $\map(X, Y;0)$ (including degree $1$) directly in terms of
products in the rational homotopy of $Y$ and the cup product in
$H^*(X, \Q)$ under certain restrictions on $X$ and $Y$. This result
was recently extended to full generality by  Buijs and  Murillo as a
special case of their description of the rational homotopy Lie
algebra of any component of a function space \cite{B-M}. Also, we
mention the recent work of  Buijs, F\'{e}lix and  Murillo \cite{BFM}
which identifies a Lie model for spaces of sections and, in
particular, for components of a function space.

Our work differs from these other results in at least two respects.
First, we describe rational Whitehead products for general function
space components by means of a construction that proceeds directly
from the Quillen model of a map.  Because we focus on a description
specifically at the level of rational homotopy groups, rather than a
more comprehensive description of the rational homotopy type, we are
able to give a fairly direct construction: our description lends
itself well to the study of specific examples. Second, our
construction of topological (rational) Whitehead products is
developed from a purely algebraic one on the mapping cone of certain
maps of chain complexes (see \secref{sec:algebra}). This provides
the basis for new developments either in the algebraic settings, or
in topological situations other than function spaces that correspond
to mapping cones.
\end{remark}

\noindent {\bf Acknowledgement.} We are indebted to Yves F\'{e}lix
for many helpful discussions, and to the Universit\'{e} Catholique
de Louvain for hospitality, during the early stages of this project.
We thank the referee for a very careful reading of the paper.

\section{The Quillen Model of a certain map }
\label{sec:Gamma} We review the development of ideas in \cite{L-S2}.
An element $\alpha \in \pi_p(\map(X, Y;f))$ is represented by a map
$a \colon S^p \to \map(X, Y;f)$ whose adjoint is a map $A \colon S^p
\times X \to Y$ that restricts to $f \colon X \to Y$ on $X.$ By
considering the Quillen minimal model of the adjoint $A$ we are led
to consider a certain complex of (generalized) derivations of
Quillen models, which we denoted by $\Der(\mathcal{L}_X,
\mathcal{L}_Y; \mathcal{L}_f)$ in \cite{L-S2}. The homology groups
of this complex may be identified with the homotopy groups of the
based mapping space $\map_*(X, Y; f),$ and the homology groups of
the mapping cone of the (generalized) adjoint map
$$ \ad_{\mathcal{L}_f} \colon \mathcal{L}_Y \to \Der(\mathcal{L}_X, \mathcal{L}_Y; \mathcal{L}_f)$$
may be identified with the homotopy groups of  $\map(X, Y; f)$ (see
\cite[Th.3.1]{L-S2}). It is in this context that we wish to describe
the Whitehead product.

Topologically, a Whitehead product $\gamma = [\alpha, \beta]_w \in
\pi_{p+q-1}(\map(X, Y; f))$, for $\alpha \in \pi_{p}(\map(X, Y; f))$
and $\beta \in \pi_{q}(\map(X, Y; f))$, is represented by  the
composition
$$\xymatrix{S^{p+q-1} \ar@/_2pc/[rr]_{\gamma}\ar[r]^-{\eta} & S^p \vee S^q
\ar[r]^-{(a \mid b)} & \map(X,Y;f)},$$
where $\eta = [\iota_1, \iota_2]_w$ is the ``universal example" of a
Whitehead product.  The adjoint $C$ of $\gamma$ is the composition
$$\xymatrix{S^{p+q-1}\times X \ar@/_2pc/[rr]_{C} \ar[r]^-{\eta \times 1} & (S^p \vee
S^q)\times X \ar[r]^-{(A\mid
B)_f} & Y}.$$
As in the previous paragraph, we will translate this adjoint into the setting of
complexes of (generalized) derivations of Quillen models.  In order to do so,
a description of the Quillen model of $\eta \times 1$ is germane.

We say a  graded rational vector space $(V,d)$ with a differential $d$ of
degree $-1$ is a {\em DG space} or, alternately, a chain complex.
 By a {\em DG Lie algebra} $(L, d)$ we will
mean a connected, graded Lie algebra $L$ with
 bilinear product $[ \, , \, ]$ satisfying
\begin{itemize}
\item[(a)]
$|[x, y]| = |x | + |y |$
\item[(b)] $[x, y] = (-1)^{|x||y| + 1}[y, x] \hbox{\
\ and} $
\item[(c)] $[x, [y, z]] = [[x,y], z] +
 (-1)^{|x||y|}[y, [x, z]]$
\end{itemize}
and differential satisfying  $$d([x, y]) =
[d(x),
y] + (-1)^{|x|}[x, d(y)].$$
We write $\L(V)$ for the free graded Lie algebra generated by the
graded space $V$ and extend this notation, writing $\L(V,W)$ for the
free Lie algebra generated by $V$ and $W$ and  $\L(V, a)$ for the free
Lie algebra generated by $a$ and $V$ where $a$ is an element of
homogeneous degree. We write  $\L(V; d)$ for the DG Lie algebra
$(\L(V), d)$.

We recall that a DG Lie algebra $(L, d)$ has an associated DG Lie
algebra of derivations $(\Der(L), D)$. Here $\Der(L)$ denotes the
graded space of positive-degree derivations of $L$ with the usual
graded commutator product of derivations, that is,
$$[\theta, \phi] = \theta\circ\phi - (-1)^{|\theta| |\phi|}\phi\circ\theta$$
for $\theta, \phi \in \Der(L)$, and differential $D(\theta) = [d,
\theta] = d\theta - (-1)^{|\theta|} \theta d$.  Then the adjoint
$\ad \colon (L, d) \to (\Der(L), D)$, defined by $\ad(l)(l') =
[l,l']$, is a map of DG Lie algebras. We are interested in a natural
generalization of this set-up.  Let $\psi \colon (L, d_{L}) \to (K,
d_{K})$ be a given DG Lie algebra map. Define a {\em
$\psi$-derivation} of degree $n$ to be a linear map $\theta \colon
L_{*} \to K_{*+n}$ satisfying
$$ \theta([x, y]) = [\theta(x),
\psi(y)] + (-1)^{n|x|}[\psi(x), \theta(y)].$$ We write $\Der_{n}(L,
K; \psi)$ for the space of degree-$n$ $\psi$-derivations. The
differential $D_{\psi}$ defined by
$$D_{\psi}(\theta) = d_{K} \circ \theta - (-1)^{|\theta|}\theta \circ
d_{L}$$ makes the pair $(\Der(L, K; \psi), D_{\psi})$ a DG space.
The $\psi$-adjoint (or ``generalized adjoint'') map
$$\ad_{\psi} \colon (K,d_{K}) \to (\Der(L, K; \psi), D_{\psi}),$$
given by $\ad_{\psi}(\alpha)(x) = [\alpha, \psi(x)]$ for $x \in L,
\alpha \in K,$  is a map of DG spaces.

Our description of the Quillen model of $\eta \times 1$ requires  a
construction featuring these generalized derivations.  Let $L =
\L(V; d)$ be a free DG Lie algebra. Let $p_{1}, \ldots, p_{n}$ be
given   integers $> 1$ and $a_{1}, \ldots, a_{n}$ elements of degree
$p_{1}-1, \ldots, p_{n}-1.$  Write $V^{a_{i}} = s^{p_{i}}(V)$ for
the $p_{i}$th suspension of $V$ and let $S_{a_{i}} \colon V \to
V^{a_{i}}$ denote the corresponding degree $p_{i}$ linear map.   We
define a new DG Lie algebra $(L(a_{1},\ldots, a_{n}), \partial)$ by
setting
\begin{equation} \label{eq:L(a,b)} L(a_{1}, \ldots, a_{n}) =
\L(V,a_{1}, \ldots, a_{n},  V^{a_{1}}, \ldots, V^{a_{n}}).
\end{equation}
Observe that the suspension $S_{a_{i}} \colon V \to V^{a_{i}}$
extends as a derivation to an element $S_{a_{i}} \in \Der_{p_{i}}(L,
L(a_{1},\ldots, a_{n}); \lambda)$ where $\lambda \colon L \to
L(a_{1},\ldots,a_{n})$ is the inclusion. Using this, we define the
differential as follows:
$$\partial(v) = d(v), \partial(a_{i}) =
0 \hbox{\, and \,}
\partial\left(S_{a_{i}}(v)\right) = (-1)^{p_{i}-1}[a_{i}, v] + (-1)^{p_i}S_{a_i}(dv)$$
for $v \in V.$  The definition of $\partial$ gives  the boundary relation
\begin{equation}\label{eq:Sa} D_{\lambda}\left(S_{a_{i}}\right) =
(-1)^{p_i-1}\ad_{\lambda}(a_{i}) \in \Der(L, L(a_{1},\ldots, a_{n});
\lambda).
\end{equation}

Recall that a simply connected CW complex $X$ of finite type admits
a Quillen minimal model $\mathcal{L}_{X} = \L(V; d_{X})$ which is a
free minimal DG Lie algebra with $V \cong s^{-1}\widetilde{H}_{*}(X;
\Q)$ and $H_{*}(\mathcal{L}_{X}) \cong \pi_{*}(\Omega X) \otimes
\Q.$ A map $f \colon X \to Y$ between such spaces induces a DG Lie
algebra map $$\mathcal{L}_{f} \colon (\mathcal{L}_{X}, d_{X}) \to
(\mathcal{L}_{Y}, d_{Y}).$$ The connection to   the map  $\eta
\times 1 \colon S^{p+q-1} \times X \to (S^{p} \vee S^{q}) \times X$
is provided by the following result.

\begin{theorem} \label{thm:SxX}\emph{\cite[Th.2.1]{L-S2}} Let $X$ be a simply
connected CW complex of finite type.  The DG Lie algebra
$\left(\mathcal{L}_{X}(a_{1}, \ldots, a_{n}), \partial \right)$
defined by {\em (\ref{eq:L(a,b)}) }is the
Quillen minimal model for the space
$\left(\vee_{i=1}^{n} S^{p_{i}}\right) \times X.$ \qed
\end{theorem}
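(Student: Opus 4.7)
The plan is to verify directly that $(\mathcal{L}_X(a_1,\ldots,a_n),\partial)$ is a minimal free DG Lie algebra whose homology realizes the rational homotopy Lie algebra of $(\vee_{i=1}^{n} S^{p_{i}})\times X$, and then to invoke uniqueness of the Quillen minimal model to conclude.

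First I would carry out the routine verifications. Checking $\partial^2=0$ reduces to computations on generators: it holds on $V$ since $d_X^2=0$, on the $a_i$ by inspection, and for $S_{a_i}(v)$ one applies (\ref{eq:Sa}) together with $d_X^2=0$ so that the two occurrences of $[a_i,d_X v]$ cancel with opposite signs. Minimality is equally direct: $d_X(V)\subseteq[\mathcal{L}_X,\mathcal{L}_X]$ by minimality of $\mathcal{L}_X$, $\partial(a_i)=0$, and in $\partial(S_{a_i}(v))$ both $[a_i,v]$ and $S_{a_i}(d_X v)$ lie in the decomposables --- the latter because $S_{a_i}$ is a $\lambda$-derivation and hence sends decomposables to decomposables.

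The key structural observation is that $\mathcal{L}_X(a_1,\ldots,a_n)$ is obtained from the free product $\L(a_1,\ldots,a_n;0)\ast\mathcal{L}_X$ --- a Lie model for the wedge $(\vee_{i=1}^n S^{p_i})\vee X$ --- by freely adjoining the generators $V^{a_i}$ whose boundaries are $\pm[a_i,v]$ modulo the correction $\pm S_{a_i}(d_X v)$. In homology these new generators kill the Samelson products between the classes of the $a_i$ and the cycles in $\mathcal{L}_X$, which is exactly the adjustment needed to pass from the free product Lie algebra $\L(a_1,\ldots,a_n)\ast H_*(\mathcal{L}_X)$ of the wedge to the Lie algebra $\L(a_1,\ldots,a_n)\oplus H_*(\mathcal{L}_X)$ with vanishing cross brackets. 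The latter is $\pi_*(\Omega((\vee S^{p_i})\times X))\otimes\Q$ since $\Omega((\vee S^{p_i})\times X)\simeq \Omega(\vee S^{p_i})\times\Omega X$.

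Turning this heuristic into a rigorous identification of the rational homotopy type is the main obstacle. I would construct an explicit DG Lie algebra map from $\mathcal{L}_X(a_1,\ldots,a_n)$ into a concrete Lie model of the product --- for instance one built from the trivial fibration $(\vee S^{p_i})\times X\to X$, or obtained by dualizing the tensor product Sullivan model --- sending $v\mapsto v$, $a_i\mapsto a_i$, and each $S_{a_i}(v)$ to a chosen primitive whose boundary is $\pm[a_i,v]+\cdots$. To show this is a quasi-isomorphism I would filter the domain by the total number of $\{a_i, V^{a_i}\}$-generators appearing in each bracket; on the associated graded the mixed pieces form acyclic Koszul-type complexes (governed by $\partial S_{a_i}(v)=\pm[a_i,v]$), so the $E^2$-page reduces to $H_*(\mathcal{L}_X)\oplus\L(a_1,\ldots,a_n)$ with trivial cross brackets, matching the target. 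The delicate technical step is controlling signs in the spectral sequence and verifying that the $S_{a_i}$-generators kill exactly the cross brackets without disturbing the free Lie structure among the $a_i$'s or the internal homology $H_*(\mathcal{L}_X)$.
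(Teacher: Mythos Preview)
The paper does not actually prove this theorem: it is quoted from \cite[Th.2.1]{L-S2} and closed with a $\qed$, so there is no in-paper argument to compare your proposal against.  That said, your outline is a sound route to an independent proof, and the verifications of $\partial^2=0$ and minimality are correct as stated.

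One simplification is worth noting.  You propose building a target Lie model for the product from a fibration or by dualizing a Sullivan model, but the direct sum $\L(a_1,\ldots,a_n;0)\oplus(\mathcal{L}_X,d_X)$ with trivial cross brackets is already a (non-free) Lie model for $\big(\vee_i S^{p_i}\big)\times X$, since the Chevalley--Eilenberg functor takes direct sums of DG Lie algebras to tensor products of cdgas.  The obvious projection $\pi$ sending $v\mapsto v$, $a_i\mapsto a_i$, and $S_{a_i}(v)\mapsto 0$ is a DG Lie algebra map---both $[a_i,v]$ and $S_{a_i}(d_Xv)$ die in the direct sum---and the present paper itself uses exactly this projection, taken for granted as a quasi-isomorphism, in the proof of \lemref{lemma:model2}.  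Your filtration\slash spectral-sequence idea can then be applied to $\pi$ rather than to a more elaborate comparison map; be aware, however, that the filtration by the total count of $a_i$- and $V^{a_i}$-generators is actually preserved by $\partial$ (both terms in $\partial S_{a_i}(v)$ have weight one), so you will need a finer filtration---for instance by the number of $V^{a_i}$-generators alone---to get a nontrivial associated graded.
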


By \thmref{thm:SxX}, the Quillen  model for $\eta \times 1$ is some
map of DG Lie algebras
$$ \Gamma \colon (\mathcal{L}_{X}(c), \partial_{c}) \to
(\mathcal{L}_{X}(a,b), \partial_{a,b})$$
where $|a| = p-1, |b|=q-1$ and $|c| = p+q-2.$
It is easy to check that $\Gamma(\chi) = \chi$ for $\chi \in
\mathcal{L}_{X}$ while $\Gamma(c) = (-1)^{p-1}[a,b].$ (This sign is appropriate per the
identifications of \cite[Ch.X.7.10]{GW}.) Let us write $S_{[a,b]}$
for the degree $p+q-1$ linear map  induced by $\Gamma$ via the rule
$$S_{[a,b]}(v) =_{\mathrm{def}}\Gamma(S_{c}(v))$$
for $v \in V.$  Then $S_{[a,b]}$ extends to a $\lambda$-derivation
$S_{[a,b]} \in \Der_{p+q-1}(\mathcal{L}_{X},\mathcal{L}_{X}(a,b); \lambda)$
satisfying the boundary
relation
\begin{equation} \label{eq:boundary} D_{\lambda}\left(S_{[a,b]}\right) =
(-1)^{q-1}\ad_{\lambda}([a,b]) \in
\Der_{p+q-2}(\mathcal{L}_{X}, \mathcal{L}_{X}(a,b); \lambda).
\end{equation}
Working backward, we see the identification of the derivation $S_{[a,b]}$
satisfying (\ref{eq:boundary}) completely
determines the Quillen minimal model of $\Gamma.$

In the next section, we find a  formula for $S_{[a,b]}$. In fact, we
identify this derivation as the ``universal example" for Whitehead
products constructed in the category of DG Lie algebras. (See Remark
\ref{rem:S[a,b]}, below.) To explain this further, we introduce the
{\em mapping cone} of a DG vector space map $\psi \colon (V, d_V)
\to (W, d_W)$, which we denote by $(\Rel(\psi), \delta_\psi)$. This
is the DG space  with $\Rel_n(\psi) =  V_{n-1} \oplus W_{n}$ and
differential $\delta_{\psi}$   defined as $\delta_\psi(v, w) =
(-d_V(v), \psi(v) + d_W(w))$.   The construction yields a short
exact sequence of DG spaces $  (W, d_{W}) \to (\Rel(\psi),
\delta_{\psi}) \to (V, d_V)$ giving rise to a long exact homology
sequence whose connecting homomorphism is $H(\psi)$. Applying this
to the adjoint $\ad_{\lambda} \colon (L(a,b),d) \to (\Der(L, L(a,b);
\lambda), D_{\lambda})$ we see the boundary conditions (\ref{eq:Sa})
and (\ref{eq:boundary}) are equivalent to the elements
$$ \zeta_{a} = \left((-1)^{p}a, S_{a}\right), \ \
\zeta_{b} = \left((-1)^{q}b, S_{b}\right) \hbox{\ \ and \ \ }
\zeta_{[a,b]} = \left((-1)^{q}[a,b], S_{[a,b]}\right)$$ being three
$D_{\lambda}$-cycles in $\Rel(\ad_\lambda)$ of degree $p, q$ and
$p+q-1,$ respectively.  In the next section, we construct a
Whitehead product $[ \, , \, ]_{w}$ on $H_{*}(\Rel(\ad_\lambda))$
satisfying
$$    \left[ \langle \zeta_{a} \rangle,
\langle \zeta_{b} \rangle \right]_{w} = \langle \zeta_{[a,b]}
\rangle$$ thereby completing the description of $\Gamma$, the
Quillen model of $\eta \times 1,$ above.

\section{Whitehead products in the category of DG Lie Algebras}
\label{sec:algebra} In this section, we describe  the construction
of  Whitehead products on the homology of chain complexes of
derivations arising from a given DG Lie algebra map $\psi \colon (L,
d_{L}) \to (K, d_{K})$.  We will approach our final construction in
several steps. First we give the definition of a Whitehead product,
referring to the classical correspondence between Whitehead products
and Samelson products.   Let $sL$ denote the suspension of $L$.
Given $x, y \in L$ define a bilinear pairing on $sL$ by the rule
$$[sx, sy]_{w} =_{\mathrm{def}}  (-1)^{|x|} s[x, y].$$
The   pairing $[ \, , \, ]_{w}$ then satisfies the identities
\begin{itemize}
\item[(i)] $ \left|[\alpha,\beta]_w \right| = |\alpha | + |\beta| - 1$
\item[(ii)] $[\alpha, \beta ]_w = (-1)^{|\alpha||\beta|} [ \beta, \alpha
]_w \hbox{\, and}
 $
 \item[(iii)] $ [ \alpha,  [\beta  , \gamma]_w]_w  =
 (-1)^{|\alpha|+1} [[ \alpha,  \beta ]_w , \gamma]_w +
 (-1)^{(|\alpha|+1)(|\beta|+1)} [\beta,  [\alpha  , \gamma]_w]_w  $
 \end{itemize}
 for $\alpha,\beta,\gamma \in sL.$
These identities    correspond, of course, to those satisfied by the higher  homotopy groups
of a space with the Whitehead product \cite[Chapter X.7]{GW}.
We denote a bilinear pairing satisfying (i)-(iii) by $[ \, , \, ]_{w}$
and call it a  {\em
 Whitehead product}.

As a preliminary, we next observe that a kind of ``pre-Whitehead product'' may be defined on any DG Lie algebra $(L, d_{L})$.
Specifically, define a   bilinear pairing on
$L$ by setting
\begin{equation} \label{eq:L product}\{ x , y\} =_{\mathrm{def}}  (-1)^{|x|+1}[x,
d_{L}(y)]. \end{equation}
The pairing $\{ \, , \, \}$ clearly satisfies (i).  Further,  we have the
following:
\begin{proposition}\label{prop:Jacobi}  The bilinear pairing $\{
\, , \, \}$ defined   on $L$ by {\em (\ref{eq:L product})} satisfies the identities  $(ii)$ and $(iii)$
up to boundaries in $(L, d_{L}).$
\end{proposition}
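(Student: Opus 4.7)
The plan is to verify identities (ii) and (iii) for $\{\,,\,\}$ by direct calculation, using only the two structural facts available on $(L, d_L)$: the graded Leibniz rule
$$d_L[x,y] = [d_L x, y] + (-1)^{|x|}[x, d_L y],$$
the graded Jacobi identity on $L$, and of course $d_L^2 = 0$. The content of the proposition is that the failure of $\{\,,\,\}$ to satisfy $(ii)$ and $(iii)$ on the nose is in each case an explicit $d_L$-boundary.

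For (ii), I would expand both $\{x,y\} = (-1)^{|x|+1}[x, d_L y]$ and $(-1)^{|x||y|}\{y,x\} = (-1)^{|x||y|+|y|+1}[y, d_L x]$. Rewriting the latter via graded antisymmetry of the Lie bracket puts it in the form $[d_L x, y]$ up to sign. Then the Leibniz rule identifies the difference of the two expressions as $\pm d_L[x,y]$, so (ii) holds modulo boundaries.

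For (iii), using $d_L^2 = 0$, each of the three terms $\{x,\{y,z\}\}$, $\{\{x,y\},z\}$, $\{y,\{x,z\}\}$ unwinds (after applying the definition of $\{\,,\,\}$ twice and the Leibniz rule) to an iterated Lie bracket of the shape $[x,[d_L y, d_L z]]$, $[[x, d_L y], d_L z]$ and $[y,[d_L x, d_L z]]$ respectively, each decorated by a predictable sign. The graded Jacobi identity applied to $[x, [d_L y, d_L z]]$ produces $[[x, d_L y], d_L z] + (-1)^{|x|(|y|-1)}[d_L y, [x, d_L z]]$. One further application of the Leibniz rule to $[y, [x, d_L z]]$ rewrites $[d_L y, [x, d_L z]]$ as $(-1)^{|y|+1}[y, [d_L x, d_L z]]$ modulo the boundary $d_L\bigl([y, [x, d_L z]]\bigr)$. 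Reassembling these identities and tracking the signs gives (iii) modulo a single explicit boundary.

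The only real obstacle is sign bookkeeping: each use of antisymmetry or Leibniz produces a sign that depends on the degrees, and one must check that these combine to produce exactly the signs $(-1)^{|x|+1}$ and $(-1)^{(|x|+1)(|y|+1)}$ demanded by the Whitehead-product axioms. No deeper structural difficulty arises; indeed the fact that the computation closes up is essentially forced by the normalization $(-1)^{|x|+1}$ in the definition (\ref{eq:L product}), which is precisely the unique choice that makes the Leibniz-Jacobi correction terms land in the image of $d_L$.
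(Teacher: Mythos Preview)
Your proposal is correct and follows essentially the same route as the paper's proof: for (ii) you use the Leibniz rule on $d_L[x,y]$ together with graded antisymmetry, and for (iii) you expand $\{x,\{y,z\}\}$ to $(-1)^{|x|+|y|}[x,[d_Ly,d_Lz]]$, apply the graded Jacobi identity, and then use one further Leibniz expansion of $d_L\big([y,[x,d_Lz]]\big)$ to convert $[d_Ly,[x,d_Lz]]$ into $[y,[d_Lx,d_Lz]]$ modulo a boundary. The paper does exactly this, with the sign bookkeeping written out explicitly; your outline would become a complete proof once those signs are tracked through, and nothing in the argument differs in substance.
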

\begin{proof}
Write $\sim$ for the homologous relation  in $(L, d_{L})$ and let $d =
d_{L}.$  Let $p =
|x|, q= |y|$ and $r = |z|.$
Use the boundary $$d\left([x, y]\right) = [d(x), y] +
(-1)^p[x, d(y)]$$ to obtain
$$ \begin{array}{lllll}
\{ x, y \}   & = (-1)^{p+1}[x, d(y) ]
           & \sim [d(x), y ]
          & = (-1)^{(p-1)q + 1} [y, d(x)]
           & = (-1)^{pq} \{ y, x \}

\end{array}
$$
For (iii), observe
$$\begin{array}{ll}
\left\{ x, \{ y , z \}\right\} &= (-1)^{p+q}
 \left[ x, d\left([y, d(z) ]\right) \right]
 \\
&=
 (-1)^{p+q}
 \left[ x, [d(y), d(z) ] \right]
 \\   &= (-1)^{p+q}\left[[x, d(y)], d(z)\right]
 + (-1)^{q(p+1)}\left[ d(y),[x, d(z)]\right] \\
\end{array}$$
Then note that
$$  (-1)^{p+q}\left[[x, d(y)], d(z)\right] =
(-1)^{q+1}\left[ \{ x, y \}, d(z) \right] =
(-1)^{p+1}\left\{ \{x, y\}, z \right\}.$$
Finally,   the boundary
$$d\left(\left[ y, [ x,
d(z)] \right] \right)
= \left[d(y),  [x,
d(z) ] \right] + (-1)^q \left[y, d\left([x,
d(z) ]\right) \right]$$
implies $$
\begin{array}{ll}
(-1)^{q(p+1)}\left[ d(y),[x, d(z)]\right] & \sim (-1)^{pq+1}\left[
y, d \left([x, d(z) ]\right) \right]  \\  &= (-1)^{(p+1)(q+1)}
\left\{y, \{x, z\} \right\}. \qed\end{array} $$
\renewcommand{\qed}{}\end{proof}

Next  we consider  the case of a DG Lie algebra map $\psi \colon
(L, d_{L}) \to (K, d_{K})$  and its mapping cone $(\Rel(\psi),
\delta_{\psi}).$ We will construct a Whitehead product on the
homology of $(\Rel(\psi), \delta_\psi)$. Notice that this is a chain
complex, not a DG Lie algebra; it is not immediately evident that
such a product may be defined. Our construction here refers to the
two previous steps.

Let $(a, \alpha) \in \Rel_{p}(\psi)$ and  $(b, \beta) \in
\Rel_{q}(\psi)$ be given. Recall that this means $a \in L_{p-1}, b
\in L_{q-1}$ while $\alpha \in K_{p}, \beta  \in K_{q}$. Define  a
bilinear pairing  $\lbr \, , \, \rbr$, using the ordinary bracket in
$L$ but the pairing defined by (\ref{eq:L product}) in $K$, by
setting
\begin{equation} \label{eq:Wh Rel} \lbr (a, \alpha), (b, \beta) \rbr =_{\mathrm{def}}
\left( (-1)^{p}[a, b], \{\alpha, \beta \} \right) = \left( (-1)^{p}[a, b],(-1)^{p+1}[\alpha, d_K(\beta)] \right).
\end{equation}
We then have the following:
\begin{proposition} \label{prop:Wh on Rel}
    Let $\psi \colon (L, d_{L})
    \to (K, d _{K})$ be a DG Lie algebra map with mapping cone
 $(\Rel(\psi), \delta_{\psi}).$
The bilinear pairing  $\lbr \, , \, \rbr$ on
$\Rel(\psi)$ defined by {\em (\ref{eq:Wh Rel})} induces a
Whitehead product $[ \, , \, ]_{w}$ on  $H_{*}(\Rel(\psi)).$
\end{proposition}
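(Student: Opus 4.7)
The strategy is to verify four things: (1) $\lbr\,,\,\rbr$ carries pairs of $\delta_\psi$-cycles to $\delta_\psi$-cycles; (2) it descends to a well-defined bilinear pairing on $H_*(\Rel(\psi))$; (3) the induced pairing is graded-antisymmetric as in axiom~(ii); and (4) it satisfies the Jacobi identity~(iii). The grading property~(i) is immediate from inspection of the formula.

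For (1), suppose $(a,\alpha)$ and $(b,\beta)$ are cycles, so that $d_L a = d_L b = 0$ and $d_K\alpha = -\psi(a)$, $d_K\beta = -\psi(b)$. A direct computation of $\delta_\psi\lbr(a,\alpha),(b,\beta)\rbr$ via the Leibniz rules kills the first coordinate (since $d_L[a,b]=0$), while the second coordinate reduces to a signed sum of $\psi[a,b]$ and $[d_K\alpha, d_K\beta] = [\psi(a),\psi(b)] = \psi[a,b]$, which cancel. For (2), I will show that if $(a,\alpha)=\delta_\psi(u,\mu)$ with $|u|=p$ and $(b,\beta)$ is any cycle, then
\[
\lbr\delta_\psi(u,\mu),(b,\beta)\rbr \;=\; \delta_\psi\bigl((-1)^p[u,b],\,(-1)^{p+1}[\mu, d_K\beta]\bigr);
\]
this is a second direct Leibniz computation, whose crucial point in the second coordinate is the identity $[d_K\mu, d_K\beta] = d_K[\mu, d_K\beta]$ (by Leibniz and $d_K^2=0$), while the $\psi(u)$-term is absorbed via $d_K\beta=-\psi(b)$. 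Variation in the second slot is handled symmetrically, or deduced from step~(3) below.

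For (3), after tallying suspension signs the first coordinate of $\lbr(a,\alpha),(b,\beta)\rbr - (-1)^{pq}\lbr(b,\beta),(a,\alpha)\rbr$ vanishes identically from graded antisymmetry in $L$, while the second coordinate is precisely $\{\alpha,\beta\}-(-1)^{pq}\{\beta,\alpha\}$. By Proposition~\ref{prop:Jacobi}, this equals $d_K\xi$ for some $\xi\in K$, so the entire difference equals $\delta_\psi(0,\xi)$, a boundary in $\Rel(\psi)$. Step (4) proceeds along the same lines: the first coordinate of the Jacobi expression for $\lbr\,,\,\rbr$ reduces, after suspension-sign tracking, to the graded Jacobi identity in $L$ and so vanishes on the nose; the second coordinate equals the Jacobi expression for the pre-Whitehead pairing $\{\,,\,\}$ on $K$, and by Proposition~\ref{prop:Jacobi} this is a $d_K$-boundary $d_K\eta$, which lifts to the $\delta_\psi$-boundary $\delta_\psi(0,\eta)$ as before.

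The principal technical obstacle is sign-bookkeeping. The formula~(\ref{eq:Wh Rel}) has been calibrated so that the Leibniz cancellations in (1)--(2) and the lifts of Proposition~\ref{prop:Jacobi} in (3)--(4) produce the Whitehead-product signs of (i)--(iii) exactly, not merely up to sign. One must therefore carefully coordinate the suspension convention $[sx,sy]_w = (-1)^{|x|}s[x,y]$ governing the first coordinate with the sign $(-1)^{p+1}$ in the second coordinate of~(\ref{eq:Wh Rel}), and track these through each iterated application of $\lbr\,,\,\rbr$.
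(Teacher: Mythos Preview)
Your proposal is correct and follows essentially the same approach as the paper's proof: both verify directly that cycles pair to cycles and that boundaries pair (with cycles) to boundaries using the same explicit bounding element, and both handle identities (ii) and (iii) by invoking Proposition~\ref{prop:Jacobi} in the $K$-coordinate while appealing to the graded Lie axioms (equivalently, the classical Whitehead-product identities under desuspension) in the $L$-coordinate. The only cosmetic difference is that the paper phrases the first-coordinate verification as ``corresponds to the classical Whitehead product,'' whereas you spell out that it vanishes on the nose after the sign check.
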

\begin{proof}
 For suppose that $(a, \alpha)$ and $(b, \beta)$ are
$\delta_{\psi}$-cycles. Then $d_{L}(a) = d_{L}(b) = 0$ while $d_{K}(\alpha) =-
\psi(a)$ and $d_{K}(\beta) = -\psi(b).$ Observe that
$$d_{K}\left(\{\alpha, \beta\}\right)
=(-1)^{p+1}d_{K}\left(\left[\alpha, d_{K}(\beta) \right] \right)
= (-1)^{p+1}\left[d_{K}(\alpha), d_{K}(\beta)\right] =
-\psi\left((-1)^{p}[a, b]\right).$$
Thus the product $\lbr (a, \alpha), (b, \beta) \rbr$ is a
$\delta_{\psi}$-cycle, as well.

Next suppose
$(a, \alpha)  = \delta_{\psi}\left(c, \gamma \right)$ is a
$\delta_{\psi}$-boundary  and $(b,
\beta)$ is again a $\delta_{\psi}$-cycle. Then
$$ \lbr (a, \alpha), (b, \beta) \rbr = \delta_{\psi}\left((-1)^{p}[c, b], -\{\gamma, \beta \} \right)
$$
is a $\delta_{\psi}$-boundary, as well. To verify this in the second
variable observe that
$$ \begin{array}{ll}
d_{K} \left(- \{ \gamma, \beta \}\right) + (-1)^{p}\psi([c, b]) &=
(-1)^{p+1}[d_{K}(\gamma),d_{K}(\beta)]
+ (-1)^{p}\psi([c, b]) \\
&= (-1)^{p+1}\left[ \alpha - \psi(c), d_{K}(\beta)\right] + (-1)^{p}\psi([c,
b]) \\
& = \{ \alpha, \beta\},
\end{array}
$$
since $d_{K}(\beta) = - \psi(b).$

The pairing $\lbr \, , \, \rbr$ thus induces a bilinear pairing $[
\, , \, ]_{w}$   on
$H_*(\Rel(\psi))$
satisfying the degree condition (i)
 by construction.  The induced product satisfies  (ii) and (iii)
in the second variable  by \propref{prop:Jacobi}. In the first
variable, $[ \, , \, ]_{w}$  corresponds to the classical Whitehead
product (except with grading reduced one instead of increased one).
\end{proof}

Our final step is to consider  the (generalized) adjoint
$$\ad_{\psi} \colon (K, d_{K}) \to  \left(\Der(L, K; \psi),
D_{\psi}\right)$$ and its mapping cone  $\left( \Rel(\ad_{\psi}),
\delta_{\ad_{\psi}} \right).$ We define Whitehead products on the
homology of the latter two complexes. Notice that, once again,
neither of these complexes is a DG Lie algebra.

As at the start of \secref{sec:Gamma}, the Whitehead product of two
elements $\alpha \in \pi_p(X)$ and $\beta \in \pi_q(X)$ involves the
``universal example'' of such Whitehead products, namely $\eta \in
\pi_{p+q-1}(S^p \vee S^q)$. This is then mapped into
$\pi_{p+q-1}(X)$ by $ (\alpha \! \mid \! \beta),$ a map induced by
the given homotopy elements. Here, we take a similar approach. Given
two elements of $H(\Rel(\ad_\psi)),$ of degree $p$ and $q$ we first
describe a  ``universal example'' of the Whitehead product in
$H_{p+q-1}(\Rel(\ad_\lambda))$ (see (\ref{eq:universal example})
below). This is then mapped to $H_{p+q-1}(\Rel(\ad_\psi))$ using the
elements whose product we are forming (see (\ref{eq:Wh Relad})
below).

For our universal example, we define a particular product in the
mapping cone of the generalized adjoint $\ad_\lambda :L(a, b) \to
\Der(L,  L(a,b); \lambda)$ defined above (\ref{eq:L(a,b)}). So
assume now that $(L,d_{L}) = \L(V; d_L)$ is a free DG Lie algebra.
Let $a$ and $b$ be  of degrees $p-1$ and $q-1$, respectively.  Then
recall $L(a, b) = \L(V, a,b, V^a, V^{b}; \partial_{a, b})$ and   the
suspensions $S_{a} \colon V \to V^{a}$ and $S_{b} \colon V \to
V^{b}$ extend to elements of $\left(\Der(L, L(a,b); \lambda),
D_{\lambda}\right)$ of degree $p$ and $q$, respectively, satisfying
$D_{\lambda}(S_{a}) = (-1)^{p-1}\ad_{\lambda}(a)$ and
$D_{\lambda}(S_{b}) = (-1)^{q-1}\ad_{\lambda}(b).$ So $((-1)^pa,
S_a)$ and $((-1)^qb, S_b)$ are cycles in degrees $p$ and $q$ of
$(\Rel(\ad_\lambda), \delta_{\ad_\lambda}).$ Define elements
$\Theta_{a}, \Theta_{b}$ of degrees $p$ and $q$  in the DG Lie
algebra $(\Der\big(L(a, b)\big), D)$ by setting
\begin{equation} \label{eq:Theta}
 \Theta_{x}(v) = S_{x}(v) \hbox{\ \ and \ \ }
 \Theta_{x}(a) = \Theta_{x}(b) = \Theta_x(V^a) = \Theta_x(V^b) = 0 \end{equation}
 for $x = a,b$ and $v \in V.$
Note that $\Theta_{x} \circ \lambda = S_{x} \in \Der(L, L(a,b); \lambda).$
From the previous step, we set
$$ \left\{ \Theta_a, \Theta_b \right\} = (-1)^{p+1} \left[ \Theta_a, D(\Theta_b) \right] \in
\Der_{p+q-1}(L(a, b))$$ and observe that $\left\{ \Theta_{a},
\Theta_{b} \right\} \circ \lambda \in \Der_{p+q-1}(L, L(a,b);
\lambda).$ Now define
\begin{equation} \label{eq:universal example} \lbr \left((-1)^pa, S_a \right), \left( (-1)^qb, S_b \right) \rbr =_{\mathrm{def}}
\left( (-1)^q [a, b], \left\{ \Theta_a, \Theta_b \right\} \circ
\lambda \right). \end{equation} Observe that the right-hand side is
a cycle of $(\Rel(\ad_\lambda), \delta_{\ad_\lambda})$ of degree
$p+q-1$. This is the universal example of a Whitehead product
mentioned above.

\begin{remark} \label{rem:S[a,b]}
Taking $(L,d_{L}) = (\mathcal{L}_{X}, d_{X})$ to be the Quillen model of a simply connected
complex $X$, we see that
$$ \left\{ \Theta_{a}, \Theta_{b} \right\} \circ \lambda \in
\Der_{p+q-1}(\mathcal{L}_{X}, \mathcal{L}_{X}(a,b); \lambda)$$
satisfies the boundary condition $(\ref{eq:boundary})$.  Setting
$S_{[a,b]} =  \left\{ \Theta_{a}, \Theta_{b} \right\} \circ \lambda$
completes the description of the Quillen minimal model of $\eta
\times 1 \colon S^{p+q-1} \times X \to (S^{p} \vee S^{q}) \times X.$
\end{remark}

Finally, we turn to the mapping cone $(\Rel(\ad_\psi),
\delta_{\ad_\psi})$  of the generalized adjoint corresponding to a
DG Lie algebra map $\psi \colon (L, d_L) \to (K, d_K)$ with $L =
\L(V)$ free. Suppose given two $\delta_{\ad_{\psi}}$-cycles,
 $$ \zeta_{a} = (\chi_{a}, \theta_{a}) \in \Rel_{p}(\ad_{\psi})
\hbox{\, and \, }
\zeta_{b}=(\chi_{b}, \theta_{b})  \in \Rel_{q}(\ad_{\psi}).$$   The
pair $\zeta_{a},\zeta_{b}$
induce  a
DG Lie algebra map
\begin{equation} \label{eq:induced}\induced \colon (L(a,b), d_{L(a,b)})
\to (K, d_{K})\end{equation}
defined, on the basis of $L(a,b)$,  as:
$$  \induced(x) = (-1)^{|x|+1}\chi_{x}, \ \
\induced(v) = \psi(v) \hbox{ \, and \, }
\induced(S_{x}(v))
= \theta_{x}(v)
$$
for $x = a,b$ and $v \in V.$ Note that this map commutes with differentials
on generators of the form $S_x(v)$ since $ \induced \circ S_x$ and $\theta_x$
agree on $L$ as $\psi$-derivations.
Define \begin{equation}\label{eq:zpair} \{ \zeta_{a}, \zeta_{b} \} =_{\mathrm{def}}
\induced \circ \{ \Theta_{a},
\Theta_{b} \} \circ \lambda \in \Der_{p+q-1}(L, K; \psi).
\end{equation}
We have the following result concerning the iteration of this pairing.
\begin{proposition} \label{pro:3} Let $ \zeta_{a}, \zeta_{b}, \zeta_{c} \in \Rel(\ad_{\psi})$
be $\delta_{\ad_\lambda}$-cycles of degree $p, q$ and  $r$, respectively.  Then
$$ \left\{ \{ \zeta_{a}, \zeta_{b} \}, \zeta_{c} \right\} =
\inducedddd \circ \left\{ \{ \Theta_{a},
\Theta_{b} \}, \Theta_c \right\} \circ \lambda \in \Der_{p+q+r-2}(L, K; \psi).$$
Here $\inducedddd \colon (L(a,b,c), d_{L(a,b,c)})
\to (K, d_{K})$ is defined as  the obvious  extension of the definition of $\induced$ given by (\ref{eq:induced}).
\end{proposition}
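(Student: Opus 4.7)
The plan is to unpack the iterated bracket $\{\{\zeta_{a}, \zeta_{b}\}, \zeta_{c}\}$ via two applications of (\ref{eq:zpair}) and then to identify the result with the stated right-hand side by constructing an intertwining DG Lie algebra map between the two universal examples involved. First, I view $\{\zeta_{a}, \zeta_{b}\}$ as the derivation component of a $\delta_{\ad_{\psi}}$-cycle $\tilde{\zeta}_{ab} = (\tilde{\chi}_{ab}, \{\zeta_{a},\zeta_{b}\}) \in \Rel_{p+q-1}(\ad_{\psi})$, whose first coordinate $\tilde{\chi}_{ab} = (-1)^{p}[\chi_{a},\chi_{b}]$ is the image under $\induced$ of the first coordinate $(-1)^{q}[a,b]$ of the universal example in (\ref{eq:universal example}). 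A second application of (\ref{eq:zpair}) with a new universal example $L(u,c) = \L(V,u,c,V^{u},V^{c})$, where $|u| = p+q-2$, then gives
$$\{\{\zeta_{a},\zeta_{b}\},\zeta_{c}\} = (\tilde{\zeta}_{ab} \mid \zeta_{c})_{\psi} \circ \{\Theta_{u},\Theta_{c}\} \circ \lambda',$$
where $\lambda'\colon L \to L(u,c)$ is the inclusion and $(\tilde{\zeta}_{ab}\mid \zeta_{c})_{\psi}$ is the DG Lie algebra map constructed from the pair via (\ref{eq:induced}).

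The bridge between the two universal examples is a DG Lie algebra map $\rho\colon L(u,c) \to L(a,b,c)$ defined on generators by $\rho(v) = v$, $\rho(c) = c$, $\rho(S_{c}(v)) = S_{c}(v)$, $\rho(u) = (-1)^{p+1}[a,b]$, and $\rho(S_{u}(v)) = \{\Theta_{a},\Theta_{b}\}(v)$, extended as a Lie algebra map. The only nontrivial check that $\rho$ commutes with differentials is on $S_{u}(v)$, which reduces to the boundary relation $D(\{\Theta_{a},\Theta_{b}\})(v) = (-1)^{q-1}[[a,b],v]$; this is (\ref{eq:boundary}) under the identification $S_{[a,b]} = \{\Theta_{a},\Theta_{b}\}\circ \lambda$ from Remark~\ref{rem:S[a,b]}, and the sign $(-1)^{p+1}$ in $\rho(u)$ is precisely what is forced by this compatibility.

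The proof then rests on two intertwining identities. First, $(\tilde{\zeta}_{ab}\mid \zeta_{c})_{\psi} = \inducedddd \circ \rho$ as DG Lie algebra maps $L(u,c) \to K$, checked generator-by-generator; the nontrivial cases are $u$ (both sides evaluate to $(-1)^{q-1}[\chi_{a},\chi_{b}]$) and $S_{u}(v)$ (both sides reduce to $\induced(\{\Theta_{a},\Theta_{b}\}(v))$ by the definition (\ref{eq:zpair})). Second, $\rho \circ \{\Theta_{u},\Theta_{c}\}(v) = \{\{\Theta_{a},\Theta_{b}\},\Theta_{c}\}(v)$ for $v \in V$: writing $\{X,Y\} = (-1)^{|X|+1}[X,D(Y)]$ per (\ref{eq:L product}) on both sides, the commutator expansions involve only $D(\Theta_{c})(v) = (-1)^{r-1}[c,v]$ together with $\Theta_{u}(v) = S_{u}(v)$, respectively $\{\Theta_{a},\Theta_{b}\}(v)$; since $\rho$ fixes $c$ and sends $S_{u}(v)$ to $\{\Theta_{a},\Theta_{b}\}(v)$, the two expansions agree after applying $\rho$ to the first. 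Using $\rho\circ\lambda' = \lambda$, composing the two intertwinings yields
$$\{\{\zeta_{a},\zeta_{b}\},\zeta_{c}\} = \inducedddd \circ \rho \circ \{\Theta_{u},\Theta_{c}\} \circ \lambda' = \inducedddd \circ \{\{\Theta_{a},\Theta_{b}\},\Theta_{c}\} \circ \lambda,$$
as required. The main obstacle is sign bookkeeping: calibrating the sign in $\rho(u)$ against (\ref{eq:boundary}) so that $\rho$ is genuinely a DG map, and tracking signs carefully through both intertwining checks; once the signs are pinned down the rest is routine.
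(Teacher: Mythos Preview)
Your proposal is correct and follows essentially the same route as the paper's proof: the paper likewise introduces an auxiliary element $z$ of degree $p+q-2$, builds the DG Lie algebra map $\phi_z \colon L(z,c) \to L(a,b,c)$ (your $\rho$) sending $z$ to a scalar multiple of $[a,b]$ and $S_z(v)$ to $\{\Theta_a,\Theta_b\}\circ\lambda(v)$, verifies the commutative triangle $(\zeta_z \mid \zeta_c)_\psi = \inducedddd \circ \phi_z$, and then appeals to the identity $\phi_z \circ \{\Theta_z,\Theta_c\}\circ\lambda = \{\{\Theta_a,\Theta_b\},\Theta_c\}\circ\lambda$. Your write-up is somewhat more explicit about the two intertwining checks and the sign calibration (indeed your sign $(-1)^{p+1}$ for $\rho(u)$ is the one forced by the DG condition, whereas the paper records $(-1)^{q-1}$), but the architecture is identical.
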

\begin{proof} Let $\zeta_z = \{ \zeta_a, \zeta_b \} \in \Rel_{p+q-1}(L, K; \psi)$.
Let $\gamma_z = ((-1)^{q} [a,b],\{ \Theta_a, \Theta_b \} \circ
\lambda) \in \Rel_{p+q-1}(L, L(a,b,c); \lambda)$ be the
$\delta_{\ad_\lambda}$-cycle as in (\ref{eq:universal example}).
Define a DG Lie algebra map
$$\phi_z \colon L(z, c) \to L(a, b, c)$$
by setting $\phi_z(v) = v$, $\phi_z(c) = c$, $\phi_z(S_c(v)) =
S_c(v)$, $\phi_z(z) = (-1)^{q-1} [a,b]$, and $\phi_z(S_z(v)) = \{
\Theta_a, \Theta_b \}\circ\lambda(v)$.  This is readily checked to
define a DG map: use the fact that $\gamma_z$ is a cycle to check on
generators $S_z(v)$. Then we have a commutative diagram
$$ \xymatrix{ L(z, c) \ar[rr]^{\phi_z} \ar[rd]_{(\zeta_{z} \mid \zeta_{c})_{\!  \psi}} && L(a, b, c) \ar[ld]^{\inducedddd} \\
& K & } .$$ The needed identity now follows directly from the
observation that
$$\phi_z \circ \{\Theta_z, \Theta_c \}\circ  \lambda =
\left\{ \{ \Theta_{a}, \Theta_{b} \}, \Theta_c \right\} \circ
\lambda \in \Der_{p+q+r-2}(L, L(a,b,c); \lambda).\qed$$
\renewcommand{\qed}{}\end{proof}

We  obtain a bilinear pairing $\lbr \, , \, \rbr$ on pairs of  cycles  of
$\Rel(\ad_{\psi})$:
\begin{equation} \label{eq:Wh Relad} \lbr \zeta_{a},\zeta_{b} \rbr  = \lbr (\chi_{a}, \theta_{a}), (\chi_{b}, \theta_{b}) \rbr
=_{\mathrm{def}}
\left((-1)^{p}[\chi_{a}, \chi_{b}], \{\zeta_{a}, \zeta_{b} \}
\right).\end{equation}

\begin{remark}\label{rem:zeta}
The diagram of chain maps
$$\xymatrix{   L(a,b) \ar[rr]^{ \! \! \! \! \! \! \! \! \ad_\lambda} \ar[d]_{\induced} && \Der(L, L(a,b); \lambda)
\ar[d]^{\inducedd} \\
K \ar[rr]^{\!  \! \! \! \! \! \! \ad_{\psi}} &&  \Der(L, K; \psi) }
$$
commutes, inducing a chain map of mapping cones
$$ \zeta \colon (\Rel(\ad_\lambda), \delta_{\ad_\lambda}) \to (\Rel(\ad_\psi), \delta_{\ad_\psi}).$$
The  map   $\zeta$  carries $\lbr \left((-1)^pa, S_a \right), \left( (-1)^qb, S_b \right) \rbr,$
 the universal example
of the Whitehead product  defined by (\ref{eq:universal example}),  to the  cycle
$ \lbr   \zeta_a  ,  \zeta_b  \rbr$  defined by (\ref{eq:Wh Relad}).
\end{remark}

Our main result in this section is:
\begin{theorem}  \label{thm:Wh main}
    Let $\psi \colon (L, d_{L})
    \to (K, d _{K})$ be a DG Lie algebra map with   $L$ free and with
    adjoint  map $\ad_{\psi} \colon
    (K, d_{K}) \to (\Der(L, K; \psi), D_{\psi})$.
The bilinear pairing $\lbr \, , \, \rbr$ defined on cycles of
$(\Rel(\ad_{\psi}), \delta_{\ad_{\psi}})$ by {\em (\ref{eq:Wh
Relad})} induces a Whitehead product $[ \, , \, ]_{w}$ on
$H_{*}(\Rel(\ad_{\psi})).$
\end{theorem}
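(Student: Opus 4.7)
The plan is to verify that $\lbr\,,\,\rbr$ descends to a well-defined pairing on $H_*(\Rel(\ad_\psi))$ and then that this pairing satisfies each of the three identities (i)--(iii) of a Whitehead product.

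That $\lbr\zeta_a,\zeta_b\rbr$ is a $\delta_{\ad_\psi}$-cycle whenever $\zeta_a,\zeta_b$ are cycles is immediate from \remref{rem:zeta}: the universal example (\ref{eq:universal example}) is a $\delta_{\ad_\lambda}$-cycle (cf.\ (\ref{eq:boundary}) and \remref{rem:S[a,b]}), and the chain map $\zeta$ of \remref{rem:zeta} sends it exactly to $\lbr\zeta_a,\zeta_b\rbr$. Independence on the choice of cycle representatives is the more delicate step. I would follow the template of the second paragraph in the proof of \propref{prop:Wh on Rel}: given $\zeta_a = \delta_{\ad_\psi}(\zeta_w)$ with $\zeta_w = (w,\omega)\in\Rel_{p+1}(\ad_\psi)$, construct an explicit bounding chain of the form $\bigl((-1)^p[w,\chi_b],\, \sigma_{w,b}\bigr)\in\Rel_{p+q}(\ad_\psi)$. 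The first-component calculation follows at once from $d_K(w)=-\chi_a$ together with the fact that $\chi_b$ is a $d_K$-cycle. The $\psi$-derivation $\sigma_{w,b}$ should be constructed by the same recipe as $\{\zeta_a,\zeta_b\}$ but with the chain data $(w,\omega)$ substituted for the cycle data $(\chi_a,\theta_a)$; the verification that $\ad_\psi((-1)^p[w,\chi_b]) + D_\psi(\sigma_{w,b}) = \{\zeta_a,\zeta_b\}$ will use the relation $D_\psi(\omega) = \theta_a - \ad_\psi(w)$ and parallels the second-component calculation at the end of the proof of \propref{prop:Wh on Rel}.

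For the identities, (i) is by inspection of degrees in (\ref{eq:Wh Relad}). For (ii) and (iii), the first component is handled on the nose by the graded antisymmetry and Jacobi identity of the bracket in $K$. For the second component, the strategy is to apply \propref{prop:Jacobi} inside the DG Lie algebra of derivations of the universal target: \propref{prop:Jacobi} applied in $\Der(L(a,b))$ yields $\{\Theta_a,\Theta_b\} \sim (-1)^{pq}\{\Theta_b,\Theta_a\}$ modulo $D$-boundaries, and applied in $\Der(L(a,b,c))$ it yields the corresponding Jacobi relation among the iterated brackets $\{\Theta_a,\{\Theta_b,\Theta_c\}\}$, $\{\{\Theta_a,\Theta_b\},\Theta_c\}$, and $\{\Theta_b,\{\Theta_a,\Theta_c\}\}$. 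Precomposition with $\lambda$ and postcomposition with the relevant DG Lie algebra map ($\induced$ or $\inducedddd$) commutes with differentials, so it carries $D$-boundaries in $\Der(L(a,b))$ (resp.\ $\Der(L(a,b,c))$) to $D_\psi$-boundaries in $\Der(L,K;\psi)$; by (\ref{eq:zpair}) and \propref{pro:3} this composition produces exactly $\{\zeta_a,\zeta_b\}$ and its iterated analogs. Hence (ii) and (iii) descend to $H_*(\Rel(\ad_\psi))$.

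I expect the main obstacle to be the explicit bounding-chain construction in the well-definedness step. The formula (\ref{eq:zpair}) for $\{\zeta_a,\zeta_b\}$ presupposes that $\induced$ is a DG Lie algebra map, and this in turn requires $\zeta_a,\zeta_b$ to be cycles; one cannot form $\{\zeta_w,\zeta_b\}$ directly when $\zeta_w$ is only a chain. The workaround is to unpack $\sigma_{w,b}$ via its values on the generators $V$ of $L$ and verify the boundary relation by a direct calculation using the defining relations of $L(a,b)$. This is the derivation-theoretic analog of the short computation that closes the proof of \propref{prop:Wh on Rel}.
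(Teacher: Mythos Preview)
Your proposal is essentially correct and tracks the paper closely for the cycle-to-cycle step and for identities (i)--(iii): the paper too works at the universal level in $\Der\big(L(a,b)\big)$ (resp.\ $\Der\big(L(a,b,c)\big)$), invokes \propref{prop:Jacobi} to obtain (ii) and (iii) up to $D$-boundaries there, and then transports via the chain map of \remref{rem:zeta} (resp.\ the diagram of \propref{pro:3}). One small point you glide over and the paper checks explicitly: precomposition with $\lambda$ is not automatically a chain map from $\big(\Der(L(a,b)),D\big)$ to $\big(\Der(L,L(a,b);\lambda),D_\lambda\big)$; one must verify $(D[\Theta_a,\Theta_b])\circ\lambda = D_\lambda([\Theta_a,\Theta_b]\circ\lambda)$ on generators of $L$, which the paper does.

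The genuine divergence is in the well-definedness step, and here the paper's route is worth knowing. You propose to build the bounding chain $\sigma_{w,b}$ by hand on generators and then carry out the derivation-theoretic analogue of the short computation in \propref{prop:Wh on Rel}. That can be made to work, but it is awkward precisely because, as you note, $\induced$ fails to be a DG map when $\zeta_a$ is not a cycle. The paper sidesteps this by enlarging the universal model: it introduces a DG Lie algebra $\widehat{L}(a,b,c)$ with an extra generator $c$ of degree $p$ satisfying $\widehat{d}(c)=-a$ and $\widehat{d}(S_c(v)) = (-1)^{p-1}[c,v] + S_a(v) + (-1)^{p+1}S_c(d_Lv)$. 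Now the triple $(\zeta_a,\zeta_b,\zeta_w)$ \emph{does} induce a DG Lie algebra map $\varphi\colon\widehat{L}(a,b,c)\to K$, and one checks that the universal Whitehead product is already a $\delta_{\ad_{\widehat\lambda}}$-boundary in $\Rel(\ad_{\widehat\lambda})$; the chain map induced by $\varphi$ then shows $\lbr\zeta_a,\zeta_b\rbr$ bounds. This buys you a clean, structural argument in place of the explicit generator-by-generator computation you anticipate; conversely, your direct approach would avoid introducing the auxiliary model and keep everything inside $K$ and $\Der(L,K;\psi)$.
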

\begin{proof}
The fact that $\lbr \, , \, \rbr$  induces a pairing on cycles of
$(\Rel(\ad_{\psi}), \delta_{\ad_{\psi}})$ follows from
\remref{rem:zeta} and the observation immediately following
(\ref{eq:universal example}). Now we check that the Whitehead
identities (i)--(iii) are satisfied up to boundaries.
First, (i) is evident.   For (ii), we return
to (\ref{eq:universal example}) and write
$$\begin{aligned}%
\lbr \left((-1)^pa, S_a \right), &\left( (-1)^qb, S_b \right) \rbr =
\left( (-1)^q [a, b], \left\{ \Theta_a, \Theta_b \right\} \circ
\lambda \right)\\
&=\left( (-1)^{q+1+(p-1)(q-1)} [b, a], \left( (-1)^{pq} \left\{
\Theta_b, \Theta_a \right\} - D\left[ \Theta_a, \Theta_b \right]
\right) \circ \lambda \right)\\
&= (-1)^{pq} \left( (-1)^{p} [b, a], \left\{ \Theta_b, \Theta_a
\right\} \right) - \left(0, \big(D\left[ \Theta_a, \Theta_b \right]
\big) \circ \lambda \right).
\end{aligned}$$
In this last term, $D$ denotes the differential in
$\Der\big(L(a,b)\big)$; the identity is obtained from the first part
of the proof of \propref{prop:Jacobi}.  Now observe that, in
$\big(\Der(L, L(a,b); \lambda), D_\lambda\big)$, we have
$$\big(D\left[ \Theta_a, \Theta_b \right]
\big) \circ \lambda = D_\lambda\big( \left[ \Theta_a, \Theta_b
\right]  \circ \lambda \big),$$
since each expression agrees on every generator $v$ of $L$.
Consequently, in $\Rel(\ad_\lambda)$, we have
$$\delta_{\ad_\lambda} \big( 0, \left[ \Theta_a, \Theta_b \right]
\circ \lambda \big) = \big( 0, (D\left[ \Theta_a, \Theta_b \right])
\circ \lambda \big).$$
Returning to the above, we may write
$$\begin{aligned}
\lbr \left((-1)^pa, S_a \right), \left( (-1)^qb, S_b \right) \rbr =
(-1)^{pq} &\lbr \left( (-1)^qb, S_b \right), \left((-1)^pa, S_a
\right) \rbr \\ &- \delta_{\ad_\lambda} \big( 0, \left[ \Theta_a,
\Theta_b \right] \circ \lambda \big). \end{aligned}$$
From the observation of \remref{rem:zeta}, it now follows that the
pairing of (\ref{eq:Wh Relad}) of cycles of $\Rel(\ad_\psi)$
satisfies the Whitehead identity (ii) up to boundaries, and in
particular induces a pairing $H_*\big(\Rel(\ad_\psi)\big)$ that
satisfies (ii).

The proof of (iii) follows the same line of argument making use of
\propref{pro:3}.  Indeed, using it, we may amplify the diagram of
\remref{rem:zeta} into the following commutative diagram of chain
maps:
$$\xymatrix{ L(z,c) \ar[rrrr]^-{\ad_\lambda} \ar[dd]_{\phi_z}
\ar[rd]^{(\zeta_{z} \mid \zeta_{c})_{\!  \psi}}& & & & \Der(L,
L(z,c); \lambda)
\ar[ld]^{\big((\zeta_{z} \mid \zeta_{c})_{\!\psi}\big)_{\!{*}}} \ar[dd]^{(\phi_z)_*}\\
 & K \ar[rr]^-{\ad_{\psi}} &&  \Der(L, K; \psi)\\
 L(a,b,c) \ar[rrrr]_-{\ad_\lambda} \ar[ru]_{\inducedddd} & & & &\Der(L, L(a,b,c); \lambda)
\ar[ul]_{\big(\inducedddd\big)_{\!{*}}} \\
}
$$
which in turn induces chain maps of mapping cones of the horizontal
maps.  Each term in (iii) may thus be identified as the image of its
counterpart in the induced map of mapping cones given by the lower
trapezoid of this diagram.  Again the corresponding identity holds
up to a boundary in this lower $\Rel(\ad_\lambda)$, in which
$\lambda \colon L \to L(a,b,c)$. As in the preceding case, this
passes to the needed identity in $\Rel(\ad_\psi)$ by the chain maps
induced by the chain map $\inducedddd \colon L(a,b,c) \to K.$

It remains to show that, if either $\zeta_a$ or $\zeta_b$ is a
boundary, then  $\lbr \zeta_a, \zeta_b \rbr$ is a boundary also so
that the pairing passes to homology.  Suppose then that $\zeta_a =
\delta_{\ad_\psi}(\zeta_{c})$ is a boundary in
  $\Rel_{p}(\ad_\psi)$
so that  $\zeta_c = (\chi_{c}, \theta_c) \in \Rel_{p+1}(\ad_\psi)$ satisfies
$$d_K(\chi_c) = -\chi_{a} \hbox{ \ \  and  \ \ } D_\psi(\theta_c) = \theta_a -
\ad_\psi(\chi_c).$$
We show that $\lbr \zeta_{a}, \zeta_{b} \rbr$ bounds also.
To do this,  we would like to form the product $\lbr\zeta_{c}, \zeta_{b}
\rbr  \in \Rel_{p+q}(\ad_\psi)$ but our construction above requires
$\zeta_c$ to be a $\delta_{\ad_{\psi}}$-cycle.
To accomodate non-cycles, we  modify the  construction
of $L(a,b)$ as follows.  Define a DG Lie algebra
$ (\widehat{L}(a,b,c), \widehat{d}) =  \L(V, a,b, c, V^a, V^b, V^c;
\widehat{d})$ with $|c| = p,$ and with differential given by
$\widehat{d}(v) = d_L(v), \ \ \widehat{d}(a) =  \widehat{d}(b) =  0,
\ \ \widehat{d}(c) = -a $ and
$$ \widehat{d}(S_a(v)) = (-1)^{p-1}[a, v] + (-1)^pS_a(d_{L}(v))$$
$$ \widehat{d}(S_b(v)) = (-1)^{q-1}[b, v] + (-1)^qS_b(d_{L}(v))$$
$$\widehat{d}(S_c(v)) = (-1)^{p-1}[c, v] + S_a(v) + (-1)^{p+1}S_c(d_{L}(v))$$
for $v \in V.$
The formula for the boundary of $S_c$  gives  the relation
$$D_\lambda(S_c) =
(-1)^{p-1}\ad_\lambda(c) + S_a \in \Der(L, \hat{L}(a,b,c);
\lambda) $$
where $\lambda \colon L \to \hat{L}(a,b,c)$ is the inclusion.
Define $\Theta_{b}, \Theta_c$ in  $\Der(\hat{L}(a,b,c))$ as above:
$$\Theta_{x}(v) = S_{x}(v) \hbox{\, and \, } \Theta_{x}(y)  = \Theta_x(V^y)  = 0$$
for $x = b,c,$  $y = a, b, c$ and $v \in V.$
The classes $\zeta_a,  \zeta_b$ and $\zeta_c$  induce, as above,   a DG
Lie algebra map
$$ \varphi \colon  (\widehat{L}(a,b,c), d) \to (K,d_K)
\hbox{ \ \ with \ \ }$$
$$ \varphi(x) = (-1)^{|x|+1} \chi_x,
\varphi(c) = (-1)^{p} \chi_c, \,  \varphi(v) = \psi(v)  \hbox{\,
\and \, }  \varphi(S_y(v)) = \theta_y(v)$$ for $x = a,b$, $y = a,b,
c$ and $v \in V.$
Writing $\widehat{\lambda} \colon L \to \widehat{L}(a,b,c)$ for the inclusion,
a straightforward computation in $(\Rel\left(\ad_{\widehat{\lambda}}\right), \delta_{\ad_{\widehat{\lambda}}})$ shows that
$$ \delta_{\ad_{\widehat{\lambda}}} \left((-1)^q[c, b], - \{ \Theta_c, \Theta_b \} \circ \widehat{\lambda} \right)
= \left((-1)^q[a, b],  \{ \Theta_a, \Theta_b \} \circ \widehat{\lambda} \right).$$
That is, the universal example of a Whitehead product, constructed
now in the complex $(\Rel\left(\ad_{\widehat{\lambda}}\right),
\delta_{\ad_{\widehat{\lambda}}})$, is a boundary there. As in
Remark \ref{rem:zeta}, the map $\varphi$ induces a chain map $\phi
\colon (\Rel\left(\ad_{\widehat{\lambda}}\right),
\delta_{\ad_{\widehat{\lambda}}}) \to (\Rel(\ad_\psi),
\delta_{\ad_\psi})$.   As $\phi \left((-1)^q[a,b], \{ \Theta_a,
\Theta_b \} \circ \widehat{\lambda} \right) = \lbr \zeta_a, \zeta_b \rbr,$ it
follows that the latter is a boundary.
\end{proof}

Finally, we obtain a Whitehead product on $H_*(\Der(L, K;
\psi))$ when $L$ is free as a direct consequence of the above.
Suppose $\theta_a \in \Der_p(L, K; \psi)$ and $\theta_b \in \Der_q(L, K; \psi)$
are $D_\psi$-cycles.    Set $\zeta^*_a = (0, \theta_a) \in
\Rel_{p}(\ad_\psi)$ and $\zeta^*_b =
(0, \theta_b) \in \Rel_q(\ad_\psi)$. Both are $\delta_{\ad_\psi}$-cycles.
Thus we can write
$$ \lbr \zeta^*_{a}, \zeta^*_{b} \rbr = \left( 0,
\{\zeta^*_a, \zeta^{*}_{b} \}\right)   \in \Rel_{p+q-1}(\ad_\psi).$$
We define
\begin{equation} \label{eq:Wh Relad*} \lbr \theta_{a}, \theta_{b} \rbr =_{\mathrm{def}} \{ \zeta^*_a ,
\zeta^*_b \} \  \in \Der_{p+q-1}(L, K; \psi).\end{equation}
We then obtain:

\begin{corollary} \label{cor:Wh based}
    Let $\psi \colon (L, d_{L})
    \to (K, d _{K})$ be a DG Lie algebra map with $L$ free.
    The bilinear pairing $\lbr \, , \, \rbr$ defined on cycles
    of $(\Der(L, K; \psi), \delta_{\psi})$ by {\em (\ref{eq:Wh Relad*})} induces a
    Whitehead product $[ \, , \, ]_{w}$ on $H_{*}(\Der(L, K;
    \psi)).$    \qed
\end{corollary}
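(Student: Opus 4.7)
The plan is to derive the corollary directly from \thmref{thm:Wh main} by restricting to the subcomplex of elements with vanishing first component. Define the inclusion $\iota \colon (\Der(L, K; \psi), D_{\psi}) \to (\Rel(\ad_{\psi}), \delta_{\ad_{\psi}})$ by $\iota(\theta) = (0, \theta)$. Since $\delta_{\ad_{\psi}}(0, \theta) = (0, D_{\psi}(\theta))$, this is a chain map. Unwinding (\ref{eq:Wh Relad}) when both first components vanish gives $\lbr (0, \theta_{a}), (0, \theta_{b}) \rbr = (0, \{\zeta^*_{a}, \zeta^*_{b}\})$, so $\iota$ intertwines the pairing (\ref{eq:Wh Relad*}) on $\Der(L, K; \psi)$ with the pairing (\ref{eq:Wh Relad}) on $\Rel(\ad_{\psi})$. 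In particular, $\iota$ carries $D_{\psi}$-cycles to $\delta_{\ad_{\psi}}$-cycles, and the pairing of two $D_{\psi}$-cycles is again a $D_{\psi}$-cycle by the corresponding statement of \thmref{thm:Wh main}.

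To conclude, I would verify that the pairing descends to homology and satisfies the Whitehead identities (i)--(iii). Identity (i) is immediate. For the rest, I would revisit the proof of \thmref{thm:Wh main} with the specific cycles $\zeta^*_{a}, \zeta^*_{b}, \zeta^*_{c}$, noting that their first components $\chi_{a}, \chi_{b}, \chi_{c}$ are all zero. Consequently, the DG Lie algebra maps $\induced$ and $\inducedddd$ of (\ref{eq:induced}) send the generators $a, b, c$ to zero, and similarly the map $\varphi \colon \widehat{L}(a,b,c) \to K$ constructed in the proof for handling boundaries satisfies $\varphi(a) = \varphi(b) = \varphi(c) = 0$.

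Because of these vanishings, each explicit bounding chain produced in the proof of \thmref{thm:Wh main} has first component equal to the image of a bracket in $a, b, c$, hence equal to zero; every such bounding chain therefore lies in $\iota(\Der(L, K; \psi))$. Since $\iota$ is injective and a chain map, the boundary relations established in \thmref{thm:Wh main} pull back to boundary relations in $(\Der(L, K; \psi), D_{\psi})$. This simultaneously yields the well-definedness of the induced pairing on $H_{*}(\Der(L, K; \psi))$ (when either input is a $D_{\psi}$-boundary) and the Whitehead identities (ii) and (iii) at the level of homology. The main obstacle is the bookkeeping in identifying, case by case, that the first-component expressions in the bounding chains of \thmref{thm:Wh main} genuinely vanish under the specialization to $\zeta^*$-type cycles; no further analysis is needed beyond what the theorem already provides.
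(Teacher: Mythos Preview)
Your proposal is correct and follows the same approach the paper has in mind: the paragraph preceding the corollary already sets up the inclusion $\theta \mapsto (0,\theta)$ and observes that the pairing of two such elements again has vanishing first component, so the result is intended as an immediate consequence of \thmref{thm:Wh main}. Your added verification---that the bounding chains produced in that proof likewise have vanishing first component under the specialization $\chi_a=\chi_b=\chi_c=0$---is exactly the detail the paper suppresses with its bare \qed.
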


\section{Iterated Products of the Universal Example} \label{sec:iterated}
In this section, we continue with our algebraic development and
record some formulas and results that will be useful for our
applications. Let $(L, d)$ be a given free DG Lie algebra and
$(L(a,b), \partial_{a,b})$ the associated DG Lie algebra for $|a| =
p-1, |b| = q-1.$ We look in detail at the universal example of a
Whitehead product
 $$\{ \Theta_{a}, \Theta_{b} \} \circ \lambda \in \Der_{p+q-1}(L,
 L(a,b); \lambda).$$
 Begin in the DG Lie algebra $\big( \Der\big(L(a,b)\big), D \big).$
We are interested in the restriction of derivations $\Theta \in
\Der(L(a,b))$ to $L \subseteq L(a,b),$ i.e., $\Theta \circ \lambda
\in \Der(L, L(a,b); \lambda)$ where $\lambda \colon L \to L(a,b)$ is
the inclusion.

Recall from (\ref{eq:Theta}) the definitions of $\Theta_a, \Theta_b$
of degree $p$ and $q$ in $\big(\Der\big(L(a,b)\big), D\big).$ From
the definitions, we have that
$$
D_\lambda(\Theta_a\circ\lambda) = (-1)^{p-1} \ad_{\lambda}(a) \qquad
\textrm{and} \qquad D_\lambda(\Theta_b\circ\lambda) =
(-1)^{q-1}\ad_{\lambda}(b).
$$
Amongst the terms that occur in $\{ \Theta_{a}, \Theta_{b} \} \circ
\lambda(v)$, we note that $\partial_{a,b}\circ \Theta_b \circ
\Theta_a \circ\lambda(v)= 0$, whereas, e.g.~$\Theta_b\circ
\partial_{a,b}\circ \Theta_a \circ \lambda(v)$ is generally
non-zero. Using these facts, we obtain that

$$
\begin{aligned} \{ \Theta_a, \Theta_b \}\circ\lambda(v) & = (-1)^{p+1}  \left[ \Theta_a,
  D(\Theta_b) \right]\circ\lambda(v)\\
& = (-1)^{p+q} \Theta_a \circ \ad_{\lambda}(b)(v) + (-1)^{pq} D(\Theta_b) \circ \Theta_a \circ\lambda(v)  \\
& = (-1)^{p+q} \Theta_a \circ \ad_{\lambda}(b) (v) + (-1)^{pq+q+1} \Theta_b \circ \partial_{a,b} \circ \Theta_a \circ\lambda(v) \\
& =  (-1)^{p+q} \Theta_ a \circ \ad_{\lambda}(b)(v) + (-1)^{pq+p+q}
\Theta_b \circ \ad_{\lambda}(a)(v) \\
& \    + (-1)^{(p+1)(q+1)} \Theta_b \circ \Theta_a \circ\lambda (dv),
\end{aligned}
$$
yielding finally
\begin{equation}\label{eq:Wh prod on v}
\begin{aligned} \{ \Theta_a, \Theta_b \}\circ\lambda(v) & =   (-1)^{q(p+1)} [b, S_a(v)] + (-1)^{p}
[a, S_b(v)] \\
& \    + (-1)^{ (p+1)(q+1)} \Theta_b \circ \Theta_a \circ\lambda (dv).
\end{aligned}
\end{equation}

The formulae of the previous section may be extended to iterated
Whitehead products.  We sketch this here.  Suppose given
$\delta_{\ad_{\psi}}$-cycles $\zeta_1, \ldots, \zeta_n \in
\Rel_{*}(\ad_{\psi})$, with each $\zeta_{i} = (\chi_{a_i},
\theta_{a_i}) \in \Rel_{p_i}(\ad_{\psi})$, with $n \geq 2$ and each
$p_i \geq 2$. Let $a_1, \ldots, a_n$ be of degrees $p_1-1, \ldots,
p_n-1$, and define elements $\Theta_{a_i}$ of degree $p_i$ in the DG
Lie algebra $\big(\Der\big(L(a_1, \ldots, a_n)\big), D\big)$ as at
(\ref{eq:Theta}) above. Thus, we have derivations $\Theta_{a_i}
\circ \lambda \in \Der(L, L(a_1, \ldots, a_n); \lambda)$ that
satisfy $D_\lambda(\Theta_{a_i} \circ \lambda) = (-1)^{p_i-1}
\ad_\lambda(a_i)$. Write $$w(a_1, \ldots, a_n) = \left[ \left[
\ldots \left[[a_1, a_2], a_3\right] \ldots a_{n-1}\right], a_n
\right]$$ for the ``left-justified" iterated bracket in $L(a_1,
\ldots, a_n)$ and, similarly,
$$w(\Theta_{a_{1}}, \ldots,  \Theta_{a_{n}}) = \left\{ \left\{ \ldots
\left\{\{\Theta_{a_{1}}, \Theta_{a_{2}}\}, \Theta_{a_{3}}\right\}
\ldots \Theta_{a_{n-1}}\right\}, \Theta_{a_{n}} \right\}$$ for the
iterated product of derivations, using the pairing of (\ref{eq:L
product}) in $\Der(L(a_1, \ldots, a_n))$. Then
\begin{equation}\label{eq:universal example iterated}
\left(\pm w(a_1, \ldots, a_n), \pm w(\Theta_{a_{1}}, \ldots,
\Theta_{a_{n}})\circ\lambda \right)
\end{equation}
is a cycle of $(\Rel(\ad_\lambda), \delta_{\ad_\lambda})$ that is
the universal example of iterated Whitehead products of this form.

The $\zeta_{i}$ induce a DG Lie algebra map as in (\ref{eq:induced})
$(\zeta_I)_\psi \colon L(a_1, \ldots, a_n) \to (K, d_{K}).$  We
define the iterated Whitehead product as
\begin{equation}\label{eq:Whitehead product iterated}
\begin{aligned}
\lbr  \lbr  \ldots \lbr \lbr \zeta_1, \zeta_2\rbr &, \zeta_3\rbr
\ldots \zeta_{n-1}\rbr , \zeta_n \rbr \\
 & = \left(\pm
(\zeta_I)_\psi\big(w(a_1, \ldots, a_n)\big), \pm ((\zeta_I)_\psi)_*
w(\Theta_{a_{1}}, \ldots, \Theta_{a_{n}})\circ\lambda \right).
\end{aligned}
\end{equation}

We  conclude this section by observing that the Whitehead products we have
constructed for a DG Lie algebra map are invariant under
quasi-isomorphisms in the second variable.

\begin{theorem} \label{thm:lifting}
 Let $\psi \colon (L,d_L) \to (K,d_K)$ be a map between connected DG Lie
algebras with $(L,d_L)$ finitely generated and minimal.  Suppose $\phi \colon (K, d_K) \to
(K', d_{K'})$ is a surjective DG Lie algebra map such that $H(\phi)
\colon H_*(K) \to H_*(K')$ is an isomorphism.  Then composition with $\phi$ induces
isomorphisms $$ H_*(\Der(L, K; \psi)) \cong H_*(\Der(L, K'; \phi
\circ \psi)) \hbox{ \ \ and \ \ } H_*(\Rel(\ad_\psi)) \cong
H_*(\Rel(\ad_{\phi \circ \psi})).$$ Further,
these are isomorphisms of
Whitehead algebras with all spaces  equipped with the Whitehead products   constructed
in \thmref{thm:Wh main} and \corref{cor:Wh based}.
\end{theorem}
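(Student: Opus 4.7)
The plan is to establish the two homology isomorphisms first, then verify that they respect the Whitehead products by invoking the naturality of the constructions of \secref{sec:algebra}. Composition with $\phi$ defines chain maps
\begin{equation*}
\phi_* \colon (\Der(L, K; \psi), D_\psi) \to (\Der(L, K'; \phi \circ \psi), D_{\phi \circ \psi}), \quad \theta \mapsto \phi \circ \theta,
\end{equation*}
and, together with $\phi$ itself, a chain map on mapping cones $\Rel(\ad_\psi) \to \Rel(\ad_{\phi \circ \psi})$. It suffices to check these are quasi-isomorphisms and that the Whitehead products of (\ref{eq:Wh Relad}) and (\ref{eq:Wh Relad*}) are preserved.

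For the derivation complex, the key point is that $L = \L(V; d_L)$ is free and minimal. Choose a homogeneous basis $\{v_i\}$ of $V$, ordered so that $d_L(v_i)$ is a sum of brackets involving only $v_j$ with $j < i$; this is possible because $L$ is minimal and finitely generated. Define a decreasing filtration
\begin{equation*}
F^p \Der(L, K; \psi) = \{\theta : \theta(v_i) = 0 \text{ for all } i \leq p\}.
\end{equation*}
The derivation property, together with the ordering of generators, ensures that this filtration is stable under $D_\psi$ and that the induced differential on each subquotient $F^{p-1}/F^p$ is simply $d_K$ applied to the ``value at $v_p$''. The associated graded complex is thus a direct sum of shifted copies of $(K, d_K)$ indexed by the generators $v_p$. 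The identical analysis for $\Der(L, K'; \phi \circ \psi)$ produces as associated graded a direct sum of shifted copies of $(K', d_{K'})$, and $\phi_*$ restricts componentwise to $\phi$ itself. Since $L$ is finitely generated the filtration is finite in each degree, so a spectral sequence comparison yields that $\phi_*$ is a quasi-isomorphism on the total complexes.

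For the mapping cone, the short exact sequence $\Der(L, K; \psi) \hookrightarrow \Rel(\ad_\psi) \twoheadrightarrow K$ (with the appropriate degree shift) is natural in $\psi$, and $\phi_*$ together with $\phi$ gives a morphism of such sequences. The associated long exact homology sequences and the five lemma upgrade the derivation isomorphism to an isomorphism $H_*(\Rel(\ad_\psi)) \cong H_*(\Rel(\ad_{\phi \circ \psi}))$.

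Compatibility with Whitehead products follows from the naturality of the construction (\ref{eq:Wh Relad}). The pairing $\lbr \zeta_a, \zeta_b \rbr$ is defined by composing the universal example in $\Rel(\ad_\lambda)$, where $\lambda \colon L \to L(a,b)$ is the inclusion, with the DG Lie algebra map $\induced \colon L(a,b) \to K$ induced by $\zeta_a$ and $\zeta_b$ (together with its derivation extension (\ref{eq:zpair})). Post-composing with $\phi$ yields exactly $(\phi_* \zeta_a \mid \phi_* \zeta_b)_{\phi \circ \psi}$, so $\phi_*$ sends $\lbr \zeta_a, \zeta_b \rbr$ to $\lbr \phi_* \zeta_a, \phi_* \zeta_b \rbr$ on the nose. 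The analogous identity for $\lbr \theta_a, \theta_b \rbr$ follows via (\ref{eq:Wh Relad*}) as a special case. The main technical hurdle is the filtration argument above; once the derivation isomorphism is in hand, the remainder is diagram chasing and naturality.
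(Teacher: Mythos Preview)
Your argument is correct, and the overall architecture---reduce the mapping-cone statement to the derivation statement via the five lemma, and handle Whitehead products by naturality of the universal-example construction---matches the paper exactly.  The genuine difference is in how you establish that $\phi_* \colon \Der(L,K;\psi) \to \Der(L,K';\phi\circ\psi)$ is a quasi-isomorphism.  The paper does this by an explicit inductive lifting, patterned on the standard lifting lemma for minimal DG Lie algebras (cf.\ \cite[Prop.~22.11]{F-H-T}): given a $D_{\phi\circ\psi}$-cycle $\theta'$, one builds a $D_\psi$-cycle $\theta$ and a homotopy $\theta''$ generator by generator so that $\phi\circ\theta = \theta' + D(\theta'')$, and argues injectivity similarly.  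Your filtration by ``vanishes on the first $p$ generators'' is a cleaner packaging of the same induction: minimality of $L$ is exactly what makes the filtration $D_\psi$-stable, and the identification of the associated graded with shifted copies of $(K,d_K)$ is the observation that, modulo higher filtration, only the $d_K$-part of $D_\psi$ survives.  Your route is shorter and more conceptual; the paper's is more explicit and produces an actual lift, which could be useful if one later needed to track specific cycles.  (Incidentally, neither argument seems to use the surjectivity hypothesis on $\phi$; only that $H(\phi)$ is an isomorphism is needed.)
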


\begin{proof}
Write $\psi' = \phi\circ\psi$; for a cycle $\zeta_a = (\chi_a,
\theta_a) \in \Rel(\ad_\psi)$, use $\zeta'_a$ to denote the
corresponding cycle $(\phi(\chi_a), \phi_*\circ\theta_a) \in
\Rel(\ad_{\psi'})$.  Then $\phi\circ (\zeta_{a} \mid \zeta_{b})_{\!
\psi} = (\zeta'_{a} \mid \zeta'_{b})_{\!  \psi'}\colon L(a,b) \to
K'$, and we have a commutative diagram as follows:
$$\xymatrix{ K \ar[rrrr]^-{\ad_\psi} \ar[dd]_{\phi}
& & & & \Der(L, K; \psi) \ar[dd]^{\phi_*}\\
 & L(a,b) \ar[lu]_{(\zeta_{a} \mid \zeta_{b})_{\!  \psi}}
 \ar[ld]^{(\zeta'_{a} \mid \zeta'_{b})_{\!  \psi'}}\ar[rr]^-{\ad_{\lambda}} &&
 \Der(L, L(a,b); \lambda)
\ar[ru]^{\big((\zeta_{a} \mid \zeta_{b})_{\!\psi}\big)_{\!{*}}}
\ar[rd]^{\big((\zeta'_{a} \mid \zeta'_{b})_{\!\psi'}\big)_{\!{*}}}\\
K' \ar[rrrr]_-{\ad_{\psi'}}  & & & &\Der(L, K';
\psi') \\
}
$$
From this, it is clear that the map $\phi$ induces a commutative
diagram of long exact homology sequences of the respective  adjoints
$\ad_\psi$ and $\ad_{\psi'}$, and also that the construction of our
Whitehead product is natural with respect to the maps induced by
composition with $\phi$.

Thus by the Five-Lemma it suffices to prove composition with $\phi$
induces an isomorphism
 $H_*(\Der(L, K; \psi)) \cong H_*(\Der(L, K'; \phi
\circ \psi)).$ The proof of this fact is an  adaptation of a
standard result for lifting maps with domain a minimal DG Lie
algebra (see \cite[Prop.22.11]{F-H-T}). We give the full details to
show composition by $\phi$ induces a surjection on homology.  The
proof of injectivity is similar and so we omit it.

Write $L = \L(V; d_L)$ where $V = \Q(x_1, \ldots, x_n)$ and the
$x_i$ are homogeneous of nondecreasing degree. Let $\theta' \in
\Der_p(L, K'; \phi \circ \psi)$ be a $D$-cycle where we write $D=
D_{\phi \circ \psi}$.   We define $\theta \in \Der_p(L, K; \psi)$
and a derivation $\theta'' \in \Der_{p+1}(L, K'; \phi \circ \psi)$
so that
\begin{equation}\label{eq:induction} D_\psi(\theta) = 0
\hbox{ \ \ and \ \ } \phi \circ \theta = \theta' + D(\theta'').
 \end{equation}
We define $\theta$ and $\theta''$ on our basis for $V$ by induction.

Observe that, since $d_L(x_1) = 0$,   $d_{K'}(\theta'(x_1)) =
D(\theta')(x_1) = 0$. Since $\phi \: K \to K'$ induces a homology
isomorphism  we can choose a $d_K$-cycle $\chi \in K$ such that
$\phi(\chi) = \theta'(x_1) + d_{K'}(\alpha)$ for some $\alpha \in
K'$. We set $\theta(x_1) = \chi$ and $\theta''(x_1) = \alpha.$

Now suppose $\theta(x_j)$ and $\theta''(x_j)$ are defined for $j <
r$, such that (\ref{eq:induction}) holds on the Lie subalgebra
$\L(x_1, \ldots, x_{r-1})$ of $L$. Set   $y =
(-1)^{p}\theta(d_L(x_r)) \in K.$ Applying  our induction hypothesis,
we see $d_K(y) = 0.$ Furthermore, since $D(\theta') = 0$, we have
  \begin{align*} d_{K'}(\theta'(x_r))
&=  (-1)^{p}\theta'(d_{L}(x_r)) & \\ & = \phi(y) +
(-1)^{p+1}D(\theta'')(d_L(x_r))
  & = \phi(y) +(-1)^{p+1}d_{K'}(\theta''(d_L(x_r))).
  \end{align*}
Thus $\phi(y)$ is a boundary in $(K', d_{K'})$ and so we can choose
$z \in K$ with $d_K(z) = y.$ Next note that $ \theta'(x_r) +(-1)^{p}
\theta''(d_L(x_r)) - \phi(z)$ is a $d_{K'}$-cycle. Thus, as above,
we can find a $d_K$-cycle $\overline{z}$ and $\alpha \in K'$ such
that
$$ \phi(\overline{z}) = \theta'(x_r) + (-1)^{p}\theta''(d_L(x_r)) - \phi(z) + d_{K'}(\alpha).$$
We put $\theta(x_r) = \overline{z} +z$ and $\theta''(x_r) = \alpha$
and (\ref{eq:induction}) is satisfied on $\L(x_1, \ldots, x_{r})$.
\end{proof}

\section{Whitehead Product Formulae for Function Space Components} \label{sec:formula}
In this section we return to the topological setting and prove our
main result, the identification of the Whitehead product in the
rational homotopy groups of a function space component.  Let $f
\colon X \to Y$ be a map between simply connected CW complexes of
finite type with $X$ now a finite complex. Let $\mathcal{L}_f \colon
(\mathcal{L}_X, d_X) \to (\mathcal{L}_Y, d_Y)$ be the Quillen
minimal model for $f.$ We first recall the identifications $$
\pi_p(\map(X, Y;f)) \otimes \Q \cong
H_p(\Rel(\ad_{\mathcal{L}_f}))$$ for $p > 1$ given  in
\cite[Th.3.1]{L-S2}.

The adjoint of a representative $a\colon S^p \to \map(X, Y; f)$ of a
homotopy class $\alpha \in \pi_p(\map(X, Y; f))$ is a map $A \colon
S^p \times X \to Y.$ By \thmref{thm:SxX}, the Quillen minimal model
for $A$ is a map
$$\mathcal{L}_A \colon \big(\mathcal{L}_X(a),
\partial_{a}\big) \to (\mathcal{L}_Y, d_Y).$$
Define $\theta_a \in \Der_p(\mathcal{L}_X, \mathcal{L}_Y;
\mathcal{L}_f)$   by setting $\theta_a(v) = \mathcal{L}_A(S_a(v))$
for $v \in V$ and extending as an $\mathcal{L}_f$-derivation. Then
$\chi_a  = (-1)^p\mathcal{L}_A(a)$ is a cycle of degree $p-1$ in
$\mathcal{L}_Y$, and $\zeta_a = (\chi_a, \theta_a) \in
\Rel_{p}(\ad_{\mathcal{L}_f})$ is a
$\delta_{\ad_{\mathcal{L}_f}}$-cycle. Set
\begin{equation} \label{eq:Phi'}\Phi'(\alpha) = \left\langle \zeta_{a} \right\rangle \in
H_{p}(\Rel(\ad_{\mathcal{L}_f})).\end{equation}
The map $\Phi'$ is then a homomorphism whose rationalization
$ \Phi \colon   \pi_p(\map(X, Y;f)) \otimes
\Q \to H_p(\Rel(\ad_{\mathcal{L}_f}))$
is an isomorphism for $p \geq 2$.

Given two homotopy classes $\alpha \in \pi_p(\map(X, Y; f))$ and
$\beta \in \pi_q(\map(X, Y; f))$, their Whitehead product $\gamma =
[\alpha, \beta]_w$ has adjoint $C$ given by
$$\xymatrix{S^{p+q-1}\times X \ar@/_2pc/[rr]_{C} \ar[r]^-{\eta \times 1} & (S^p \vee
S^q)\times X \ar[r]^-{(A\mid B)_f} & Y}$$ where $\eta$ is the
universal example of the Whitehead product. Let  $\zeta_a = (\chi_a,
\theta_a) \in \Rel_p(\ad_{\mathcal{L}_f})$ and $\zeta_b = (\chi_b, \theta_b)
\in \Rel_q(\ad_{\mathcal{L}_f})$ satisfy $\langle \zeta_a \rangle =
\Phi'(\alpha)$ and $\langle \zeta_b \rangle = \Phi'(\beta).$

\begin{lemma} \label{lemma:model1}
The map
$$ (\zeta_a \mid \zeta_b)_{\! \mathcal{L}_f} \colon
\big(\mathcal{L}_X(a,b), \partial_{ a,b} \big) \to
(\mathcal{L}_Y, d_Y)$$
defined by (\ref{eq:induced})
is the Quillen minimal model for $$(A\mid
B)_f \colon (S^p \vee
S^q)\times X \to Y.$$
\end{lemma}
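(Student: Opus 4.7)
The plan is to exploit the fact that $(S^p \vee S^q) \times X$ is the pushout
$$ (S^p \times X) \cup_X (S^q \times X) $$
in Top (amalgamated along the basepoint inclusions $X \hookrightarrow S^p \times X$ and $X \hookrightarrow S^q \times X$), and that this pushout is modeled on the algebraic side by the corresponding pushout of DG Lie algebras. Note first that the restrictions of $(A \mid B)_f$ to the three subspaces $S^p \times X$, $S^q \times X$, and $X$ are precisely $A$, $B$, and $f$, by construction. By \thmref{thm:SxX}, their Quillen minimal models are $\mathcal{L}_A \colon \mathcal{L}_X(a) \to \mathcal{L}_Y$, $\mathcal{L}_B \colon \mathcal{L}_X(b) \to \mathcal{L}_Y$, and $\mathcal{L}_f \colon \mathcal{L}_X \to \mathcal{L}_Y$; these agree on $\mathcal{L}_X$ since $A|_X = B|_X = f$.

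On the algebraic side, I would identify the DG Lie algebra $\mathcal{L}_X(a,b) = \L(V, a, b, V^a, V^b; \partial_{a,b})$ as the pushout of the inclusions $\mathcal{L}_X(a) \hookleftarrow \mathcal{L}_X \hookrightarrow \mathcal{L}_X(b)$ in DG Lie algebras. This is clear from the free/amalgamated structure: both sub-algebras contribute the generators $a, V^a$ and $b, V^b$ freely over the common $\mathcal{L}_X = \L(V)$, and the differentials on $V^a$ and $V^b$ are compatible. Since the topological inclusions $X \hookrightarrow S^p \times X$ and $X \hookrightarrow S^q \times X$ are cofibrations and Quillen's construction sends such pushouts of cofibrations to pushouts, $\mathcal{L}_X(a,b)$ is indeed the Quillen minimal model of $(S^p \vee S^q) \times X$ (minimality being the one small verification: the added differentials $\partial a = \partial b = 0$ and $\partial S_x(v) = \pm[x,v] \pm S_x(d_L v)$ land in decomposables because $\mathcal{L}_X$ is minimal).

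Finally, by the universal property of the pushout there is a unique DG Lie algebra map $\mathcal{L}_X(a,b) \to \mathcal{L}_Y$ extending $\mathcal{L}_A$, $\mathcal{L}_B$, and $\mathcal{L}_f$; this map is by definition the Quillen minimal model of $(A \mid B)_f$. It only remains to check by direct inspection of formula (\ref{eq:induced}) that $\induced$ has exactly these restrictions. On $V$ it is $\mathcal{L}_f$ by definition. On the generator $a$ we compute $\induced(a) = (-1)^{|a|+1}\chi_a = (-1)^{p}\chi_a = (-1)^{2p}\mathcal{L}_A(a) = \mathcal{L}_A(a)$ using the convention $\chi_a = (-1)^p \mathcal{L}_A(a)$, and on $S_a(v)$ it is $\theta_a(v) = \mathcal{L}_A(S_a(v))$ by the definition of $\theta_a$; so its restriction to $\mathcal{L}_X(a)$ is $\mathcal{L}_A$. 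Symmetrically on $\mathcal{L}_X(b)$. I expect the main (minor) obstacle to be the bookkeeping of the pushout identification: one must confirm both that $\mathcal{L}_X(a,b)$ really is the minimal model of the topological pushout (rather than just some model), and that the signs in (\ref{eq:induced}) are precisely those that make the three restrictions match $\mathcal{L}_A$, $\mathcal{L}_B$, and $\mathcal{L}_f$ on the nose. Once these points are verified, the lemma follows immediately from the universal property.
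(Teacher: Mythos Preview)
Your approach is essentially the paper's: both identify $\mathcal{L}_X(a,b)$ as the pushout $\mathcal{L}_X(a)\cup_{\mathcal{L}_X}\mathcal{L}_X(b)$ and obtain $(\zeta_a\mid\zeta_b)_{\mathcal{L}_f}$ as the pushout of $\mathcal{L}_A$ and $\mathcal{L}_B$. The verification that the restrictions match on generators is fine.

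However, your sentence ``this map is by definition the Quillen minimal model of $(A\mid B)_f$'' hides the one genuine issue, and it is exactly the point the paper spends its effort on. A Quillen minimal model of a map is only well-defined up to DG Lie algebra homotopy, so knowing that $\mathcal{L}_A$ and $\mathcal{L}_B$ are models for $A$ and $B$ does not by itself guarantee that their strict pushout models $(A\mid B)_f$. Concretely, one must produce a homotopy $\mathcal{H}\colon \mathcal{L}_X(a,b)\to (t,dt)\otimes\QL(Y)$ between $\eta_Y\circ(\zeta_a\mid\zeta_b)_{\mathcal{L}_f}$ and $\QL\big((A\mid B)_f\big)\circ\eta_{(S^p\vee S^q)\times X}$. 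The paper does this by first arranging that the individual homotopies $\mathcal{H}_a$ (for $\mathcal{L}_A$) and $\mathcal{H}_b$ (for $\mathcal{L}_B$) both restrict to a fixed homotopy $\mathcal{H}_f$ on $\mathcal{L}_X$, via a relative lifting argument, and then pushing out $\mathcal{H}_a$ and $\mathcal{H}_b$ over $\mathcal{H}_f$. Your appeal to ``Quillen's construction sends pushouts of cofibrations to pushouts'' is the right intuition, but it does not substitute for this compatibility-of-homotopies step; without it you have only shown that the pushout map is \emph{some} DG Lie map, not that it models the correct topological map.
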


\begin{proof}
Denote by $\QL$ the composite of the Sullivan and the Quillen
functors. That is, we write $\QL(Z)$ to denote the DG Lie algebra
obtained by applying the Quillen functor to the coalgebra dual of
$A^*(Z)$, which is the Sullivan functor applied to $Z$. (See \cite[Sec. 22(e)]{F-H-T} or \cite[I.1(7)]{Tan}.) For a space
$Z$, denote by $\eta_Z\colon \mathcal{L}_Z \to \QL(Z)$ the Quillen
minimal model of $Z$. To establish the Lemma, we want the diagram
$$\xymatrix{
\mathcal{L}_X(a,b) \ar[rr]^-{(\zeta_a \mid
\zeta_b)_{\!\mathcal{L}_f}}
 \ar[d]_{\eta_{(S^p \vee S^q)\times X}}
& &\mathcal{L}_Y \ar[d]^{\eta_{Y}}\\
\QL\big((S^p \vee S^q)\times X\big) \ar[rr]_-{\QL\big((A\mid
B)_f\big)} & &\QL(Y). }$$
to be homotopy commutative, in the DG lie algebra sense. Following
\cite[II.5.(20)]{Tan}, this means we seek a DG Lie algebra map
$\mathcal{H} \colon \mathcal{L}_X(a,b) \to (t, dt) \otimes \QL(Y)$
such that $p_0\circ \mathcal{H} = \eta_{Y} \circ (\zeta_a \mid
\zeta_b)_{\!\mathcal{L}_f}$ and $p_1\circ\mathcal{H} =
\QL\big((A\mid B)_f\big) \circ  \eta_{(S^p \vee S^q)\times X}$. Now
we have a pushout of DG Lie algebras
$$\xymatrix{\mathcal{L}_X \ar[r]^-{\lambda_a} \ar[d]_-{\lambda_b} & \mathcal{L}_X(a)
\ar[d]^{\overline{\lambda_b}}\\
\mathcal{L}_X(b) \ar[r]_-{\overline{\lambda_a}} &
\mathcal{L}_X(a,b),}$$
where the maps $\lambda_a,\lambda_b, \overline{\lambda_a}, \overline{\lambda_b}$ are the appropriate inclusions. 
Notice that our desired minimal model $(\zeta_a \mid \zeta_b)_{\!
\mathcal{L}_f}$ is exactly the pushout of the minimal models
$\mathcal{L}_A \colon \mathcal{L}_X(a) \to \mathcal{L}_Y$ and
$\mathcal{L}_B \colon \mathcal{L}_X(b) \to \mathcal{L}_Y$.  We will
obtain our homotopy $\mathcal{H}$ by pushing out homotopies from
$\mathcal{L}_X(a)$ and $\mathcal{L}_X(b)$.  To this end, suppose
that we have our chosen minimal model $\mathcal{L}_f \colon
\mathcal{L}_X \to \mathcal{L}_Y$, and a DG Lie algebra homotopy
$\mathcal{H}_f \colon \mathcal{L}_X \to (t, dt) \otimes \QL(Y)$ that
satisfies $p_0\circ \mathcal{H}_f = \eta_{Y} \circ \mathcal{L}_f$
and $p_1\circ\mathcal{H}_f = \QL(f)\circ \eta_{X}$.

As in the proof of \cite[Prop.A.3]{L-S2}, we may assume that the
following cube is (strictly) commutative:
\begin{displaymath}
\xymatrix{ & \mathcal{L}_{X}(a)
 \ar[rr]^-{\overline{\lambda_b}}
\ar'[d]^-{\eta_{S^p\times X}}[dd] & & \mathcal{L}_{X}(a,b)
\ar[dd]^-{\eta_{(S^p\vee S^q)\times X}} \\
\mathcal{L}_{X} \ar[ru]^{\lambda_a} \ar[rr]_(0.63){ \lambda_b}
 \ar[dd]_{\eta_X}& &
\mathcal{L}_{X}(b) \ar[ru]_{\overline{\lambda_a}}
\ar[dd]^(0.3){\eta_{S^q\times X}}\\
 &
\QL(S^p\times X) \ar'[r]_(0.7){\QL(i_1\times 1)}[rr] & &
\QL((S^p\vee S^q)\times X)  \\
\QL(X) \ar[ru]^{\QL(i_2)} \ar[rr]_{\QL(i_2)} & & \QL(S^q\times X)
\ar[ru]_{\QL(i_2\times 1)} }
\end{displaymath}
Next, in the diagram
$$\xymatrix{\mathcal{L}_X \ar[r]^-{\lambda_a} \ar[d]_{\eta_X} & \mathcal{L}_X(a)
\ar[d]_{\eta_{{S^p}\times X}} \ar[r]^-{\mathcal{L}_A} &
\mathcal{L}_Y
\ar[d]^{\eta_Y}\\
\QL(X) \ar[r]_-{\QL(i_2)} & \QL(S^p\times X) \ar[r]_-{\QL(A)} &
\QL(Y),}$$
the left-hand square commutes and furthermore, the right-hand square
commutes up to homotopy, but we may assume that the homotopy
$\mathcal{H}_a \colon \mathcal{L}_X(a) \to (t, dt) \otimes \QL(Y)$
extends the homotopy $\mathcal{H}_f$, that is, that we have
$\mathcal{H}_a \circ \lambda_a = \mathcal{H}_f$.  This last
assertion is easily justified by adapting the usual lifting lemma:
rather than lift $\QL(A)\circ \eta_{{S^p}\times X}$ through the
quasi-isomorphism $\eta_Y$ starting with the elements of lowest
degree in $\mathcal{L}_X(a)$, we may start with the lift already
defined on $\mathcal{L}_X$ as $\mathcal{L}_f$, with $\eta_Y\circ
\mathcal{L}_A = \eta_Y\circ \mathcal{L}_f$ and $\QL(A)\circ
\eta_{S^p\times X} = \QL(f)\circ \eta_X$ homotopic by
$\mathcal{H}_f$ when resticted to $\mathcal{L}_X$. (See
\cite[Prop.10.4]{G-M} for the corresponding result in the DG algebra
setting.) We argue similarly with $b$ and $B$ replacing $a$ and $A$
respectively. This gives us the pushout
\begin{displaymath}
\xymatrix{ \mathcal{L}_X  \ar[d]_{\lambda_b}
\ar[r]^(.44){\lambda_a}&
\mathcal{L}_X(a) \ar[d]_{\overline{\lambda_a}} \ar@/^/[ddr]^{\mathcal{H}_a}&\\
\mathcal{L}_X(b)\ar[r]^(.44){\overline{\lambda_b}} \ar@/_/[drr]_{\mathcal{H}_b}
& \mathcal{L}_X(a,b) \ar@{.>}[dr]^{\mathcal{H}}&\\
&&(t, dt) \otimes \QL(Y)\\}
\end{displaymath}
which defines $\mathcal{H}$. We check that $\mathcal{H}$ has the
desired properties.
\end{proof}

We next give an official statement of our work in
\secref{sec:Gamma}. Define   $$\Gamma \colon \big(\mathcal{L}_X(c),
\partial_{c}\big) \to \big(\mathcal{L}_X(a,b), \partial_{a,b}\big)$$
by setting $\Gamma (\chi) = \chi$ for $\chi \in \mathcal{L}_X,
\Gamma(c) = (-1)^{p-1}[a, b]$ and
$$\Gamma(S_c(v)) =  \left\{ \Theta_{a}, \Theta_{b} \right\} \circ
\lambda(v)$$
where   $\Theta_{a}, \Theta_{b} \in \Der(\mathcal{L}_{X}(a,b))$
are as defined in (\ref{eq:Theta}) and $\lambda \: \mathcal{L}_{X} \to
\mathcal{L}_{X}(a,b)$ is the inclusion.

\begin{lemma} \label{lemma:model2} The map $$\Gamma \colon
\big(\mathcal{L}_X(c), \partial_c\big) \to
\big(\mathcal{L}_X(a,b), \partial_{a,b)}\big)$$ is the Quillen model
for $$\eta\times 1 \colon  S^{p+q-1} \times X \to (S^{p} \vee S^{q}) \times X.$$
\end{lemma}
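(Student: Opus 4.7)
The plan is to verify two things: first, that $\Gamma$ is a well-defined DG Lie algebra map, and second, that $\Gamma$ lies in the DG Lie algebra homotopy class of the Quillen minimal model of $\eta\times 1$. By \thmref{thm:SxX}, the source $(\mathcal{L}_X(c),\partial_c)$ and target $(\mathcal{L}_X(a,b),\partial_{a,b})$ are already known to be the correct minimal Quillen models, so only these two items remain.

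The first item is a direct verification. Since $\mathcal{L}_X(c)$ is free as a graded Lie algebra, $\Gamma$ extends uniquely from its definition on generators, and the chain condition need only be checked there; the cases $v\in V$ and $c$ are immediate, while the case of $S_c(v)$ reduces, after using the standard identity $\partial_{a,b}(\theta(v)) = D_\lambda(\theta)(v) + (-1)^{|\theta|}\theta(d_X v)$ for $\lambda$-derivations $\theta$, to the boundary relation $D_\lambda(\{\Theta_a,\Theta_b\}\circ\lambda) = (-1)^{q-1}\ad_\lambda([a,b])$ supplied by \remref{rem:S[a,b]}.

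For the second item I would mirror the discussion of \secref{sec:Gamma}. Any Quillen model of $\eta\times 1$ must, up to DG Lie algebra homotopy, restrict to the identity on $\mathcal{L}_X$ (since $\eta\times 1$ acts as the identity on the $X$-factor) and send $c$ to $(-1)^{p-1}[a,b]$ (by the known model for $\eta = [\iota_1,\iota_2]_w$ with the sign convention of \cite[Ch.X.7.10]{GW}). The remaining data is the $\lambda$-derivation $\Gamma\circ S_c$, which must satisfy the boundary relation (\ref{eq:boundary}) in order for $\Gamma$ to be a chain map. Any two admissible choices differ by a $D_\lambda$-boundary and yield DG Lie algebra homotopic maps $\mathcal{L}_X(c)\to \mathcal{L}_X(a,b)$; \remref{rem:S[a,b]} guarantees that our $\Gamma$ furnishes a valid choice, so $\Gamma$ lies in the correct homotopy class.

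The main obstacle is the rigidity claim just invoked: that the freedom in the $\lambda$-derivation $\Gamma\circ S_c$ satisfying (\ref{eq:boundary}) only produces DG Lie algebra homotopies of the resulting map. The cleanest way to handle this is to mimic the proof of \lemref{lemma:model1}, constructing an explicit DG Lie algebra homotopy $\mathcal{H}\colon \mathcal{L}_X(c) \to (t,dt)\otimes \QL((S^p\vee S^q)\times X)$ between $\eta_{(S^p\vee S^q)\times X}\circ \Gamma$ and $\QL(\eta\times 1)\circ \eta_{S^{p+q-1}\times X}$, built inductively on the generators $v$, $c$, and $S_c(v)$, with the extension over $S_c(v)$ using that the difference between two admissible derivations is a $D_\lambda$-boundary. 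Together with the chain-map verification above, this completes the identification of $\Gamma$ as the Quillen minimal model for $\eta\times 1$.
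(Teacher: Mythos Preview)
Your overall strategy---check that $\Gamma$ is a chain map, then argue it lies in the correct DG Lie algebra homotopy class---is sound, but your execution of the second step is both more laborious than necessary and contains a slip. You write that ``any two admissible choices differ by a $D_\lambda$-boundary''; in fact they differ only by a $D_\lambda$-\emph{cycle}, and distinct cycle classes can yield non-homotopic maps $\mathcal{L}_X(c)\to\mathcal{L}_X(a,b)$. You seem to sense this, since you then propose to bypass the rigidity claim by building an explicit homotopy in the style of \lemref{lemma:model1}, but that construction is left vague (what exactly is the target map you are homotoping to?) and would require real work to carry out.

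The paper's argument is much shorter and avoids these difficulties entirely. One observes that the obvious projections give a commutative square
\[
\xymatrix{
(\mathcal{L}_X(c),\partial_c) \ar[r]^-{\Gamma} \ar[d] & (\mathcal{L}_X(a,b),\partial_{a,b}) \ar[d]\\
\L(c;0)\oplus(\mathcal{L}_X,d_X) \ar[r]_-{\phi} & \L(a,b;0)\oplus(\mathcal{L}_X,d_X)
}
\]
where the vertical maps are quasi-isomorphisms (they are the product-model descriptions of $S^{p+q-1}\times X$ and $(S^p\vee S^q)\times X$) and $\phi$ is the direct sum of the Quillen model of $\eta$ with the identity on $\mathcal{L}_X$. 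Since $\phi$ is manifestly a (non-minimal) Quillen model for $\eta\times 1$, uniqueness of the Quillen model of a map finishes the proof. The point is that after projecting to the product model all the delicate information in $\Gamma\circ S_c$ disappears, so no rigidity argument or explicit homotopy is needed; commutativity of the square is the only thing to verify, and it is immediate on generators.
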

\begin{proof}
We have a commutative  diagram
$$\xymatrix{\left( \mathcal{L}_X(c), \partial_c\right) \ar[d]
\ar[r]^-{\Gamma} & \left(\mathcal{L}_X(a, b),
\partial_{a,b}\right)\ar[d]\\
\L(c; 0)\oplus (\mathcal{L}_X, d_X) \ar[r]_-{\phi} & \L(a,b;
0)\oplus (\mathcal{L}_X, d_X)}
$$
where the vertical maps are the projections and $\phi(c) =
(-1)^{p-1}[a, b]$ and $\phi(\chi) = \chi$ for $\chi \in
\mathcal{L}_X$. Since $\phi$ is evidently a (non-minimal) Quillen
model for $\eta \times 1$, the result follows from uniqueness of the
Quillen model of a map.
\end{proof}

Combining these facts we obtain our identification.

\begin{theorem} \label{thm:map}
 Let $f \colon X \to Y$ be a map between simply connected CW complexes
 of finite type with $X$ finite.
The map $$\Phi' \colon \pi_{p}(\map(X, Y; f)) \to
H_{p}(\Rel(\ad_{\mathcal{L}_{X}}))$$
defined for $p > 1$ by (\ref{eq:Phi'}) preserves Whitehead products
where the latter space has the Whitehead product given by
\thmref{thm:Wh main}.  Thus $\Phi'$ induces an isomorphism
$$ \pi_{*}(\map(X, Y;f)) \otimes \Q, \, \, [ \, , \, ]_{w} \cong
H_{*}(\Rel(\ad_{\mathcal{L}_{X}})), \, \, [ \, , \, ]_{w}$$
of rational Whitehead algebras in degrees $>1.$
\end{theorem}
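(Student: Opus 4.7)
The plan is to trace a Whitehead product $\gamma = [\alpha, \beta]_w$ through the adjoint construction defining $\Phi'$ and show that the cycle $\zeta_c$ extracted from the Quillen model of the adjoint $C$ of $\gamma$ coincides on the nose with the algebraic pairing $\lbr \zeta_a, \zeta_b \rbr$ defined in (\ref{eq:Wh Relad}). Since the rationalization of $\Phi'$ was already shown to be an isomorphism in \cite[Th.3.1]{L-S2}, once this identification is in hand the asserted isomorphism of Whitehead algebras will follow immediately.

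To set things up, choose representatives $a \colon S^p \to \map(X, Y; f)$ and $b \colon S^q \to \map(X, Y; f)$ of $\alpha, \beta$, with adjoints $A, B$; then a representative $c$ of $\gamma$ factors as $(a \mid b) \circ \eta$, so its adjoint factors as $C = (A \mid B)_f \circ (\eta \times 1)$, as in the commutative diagram of \secref{sec:Gamma}. By \lemref{lemma:model1}, a Quillen minimal model of $(A \mid B)_f$ is $(\zeta_a \mid \zeta_b)_{\mathcal{L}_f}$; by \lemref{lemma:model2}, a Quillen model of $\eta \times 1$ is $\Gamma$. Consequently a Quillen model of $C$ is the composition $\mathcal{L}_C = (\zeta_a \mid \zeta_b)_{\mathcal{L}_f} \circ \Gamma \colon \big(\mathcal{L}_X(c), \partial_c\big) \to (\mathcal{L}_Y, d_Y)$.

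The remaining step is to unwind the recipe (\ref{eq:Phi'}) for $\zeta_c = \Phi'(\gamma)$ using this specific composite model. On the $\mathcal{L}_Y$-coordinate, $\Gamma(c) = (-1)^{p-1}[a,b]$ combined with $(\zeta_a \mid \zeta_b)_{\mathcal{L}_f}(x) = (-1)^{|x|+1}\chi_x$ for $x = a, b$ produces, after a short sign count, $\chi_c = (-1)^{p+q-1}\mathcal{L}_C(c) = (-1)^p[\chi_a, \chi_b]$. On the derivation coordinate, the very definition of $\Gamma$ gives $\Gamma(S_c(v)) = \{\Theta_a, \Theta_b\}\circ\lambda(v)$, and hence $\theta_c(v) = \mathcal{L}_C(S_c(v)) = (\zeta_a \mid \zeta_b)_{\mathcal{L}_f} \circ \{\Theta_a, \Theta_b\}\circ\lambda(v) = \{\zeta_a, \zeta_b\}(v)$, matching (\ref{eq:zpair}). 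Comparing with (\ref{eq:Wh Relad}) yields the equality of cycles $\zeta_c = \lbr \zeta_a, \zeta_b \rbr$, so $\Phi'(\gamma) = [\Phi'(\alpha), \Phi'(\beta)]_w$ in $H_*(\Rel(\ad_{\mathcal{L}_f}))$.

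The main subtlety to be addressed is that a Quillen model of a composition is only determined up to DG Lie homotopy, so we must ensure that the specific cycles $\zeta_a, \zeta_b$ representing $\Phi'(\alpha), \Phi'(\beta)$ are compatible with the particular model of $(A \mid B)_f$ produced in \lemref{lemma:model1}. This is precisely what is accomplished in the proof of that lemma by arranging that the homotopies $\mathcal{H}_a$ and $\mathcal{H}_b$ both extend a common homotopy $\mathcal{H}_f$ on $\mathcal{L}_X$; with this coherence in place the pasting argument above passes to homology without further correction, and the known isomorphism $\Phi \colon \pi_*(\map(X, Y; f)) \otimes \Q \to H_*(\Rel(\ad_{\mathcal{L}_f}))$ from \cite[Th.3.1]{L-S2} delivers the concluding sentence of the theorem.
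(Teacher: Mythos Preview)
Your proof is correct and follows essentially the same route as the paper's: you use \lemref{lemma:model1} and \lemref{lemma:model2} to identify a Quillen model of the adjoint $C$ as the composite $(\zeta_a \mid \zeta_b)_{\mathcal{L}_f}\circ\Gamma$, then read off $\chi_c$ and $\theta_c$ from this model and match them with the definition (\ref{eq:Wh Relad}). Your closing paragraph on the coherence of the homotopies $\mathcal{H}_a$, $\mathcal{H}_b$ over $\mathcal{H}_f$ is a welcome clarification of why the particular cycles $\zeta_a,\zeta_b$ may be used, but otherwise the argument is the same as the paper's.
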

\begin{proof}
With notation as above
and Lemmas \ref{lemma:model1} and \ref{lemma:model2}, $$
 (\zeta_a \mid \zeta_b)_{\mathcal{L}_f} \circ \Gamma\colon
(\mathcal{L}_X(c), \partial_{c}) \to  (\mathcal{L}_Y,
d_Y)$$
is the Quillen minimal model for the adjoint $C$ of $\gamma = [\alpha,
\beta] \in \pi_{p+q-1}(\map(X, Y;f)).$ Thus $\Phi'(\gamma)$ is
represented by the $\delta_{\ad_{\mathcal{L}_f}}$-cycle $\zeta_c =
(\chi_c, \theta_c) \in \Rel_{p+q-1}(\ad_{\mathcal{L}_f})$
with
$$\chi_c = (-1)^{p+q-1}   (\zeta_a \mid
\zeta_b)_{\mathcal{L}_f} \circ \Gamma (c) = (-1)^p[\chi_a, \chi_b] \in
\left(\mathcal{L}_Y\right)_{p+q-2}\hbox{ \ \
while \ \ }$$
$$ \theta_c(v)  =  (\zeta_a \mid
\zeta_b)_{\mathcal{L}_f} \circ \Gamma(S_c(v))  = (\zeta_a \mid
\zeta_b)_{\mathcal{L}_f} \left\{\Theta_{a}, \Theta_{b} \right\}
\circ \lambda(v) \in \Der_{p+q-1}(\mathcal{L}_X, \mathcal{L}_Y;
\mathcal{L}_f).$$ Thus $$\Phi'(\gamma) = \left\langle \lbr
\zeta_{a}, \zeta_{b} \rbr \right\rangle = \left[ \langle \zeta_{a}
\rangle, \langle \zeta_{b} \rangle \right]_{w}$$ by \thmref{thm:Wh
main} and the definition of the pairing $\lbr \, , \, \rbr$ at
(\ref{eq:Wh Relad}).
\end{proof}

We now apply the same line of reasoning to the case of the based
function space.  We first recall the homomorphism,
 \begin{equation} \label{eq:Psi'} \Psi' \colon \pi_p(\map_*(X, Y;f)) \to
\pi_p(\Der(\mathcal{L}_X, \mathcal{L}_Y; \mathcal{L}_f))
\end{equation}
from \cite[Th.3.1]{L-S2} inducing an isomorphism after rationalization for $p > 1$.
Given $\alpha \in \pi_p(\map_*(X, Y;f))$
 we have $\Psi'(\alpha) = \langle \theta_a \rangle$
where $ \theta_a  \in \Der_p(\mathcal{L}_X, \mathcal{L}_Y;
\mathcal{L}_f)$ is the $D_{\mathcal{L}_f}$-cycle given by $\theta_a
= \mathcal{L}_A \circ S_a$ where $\mathcal{L}_A \colon
(\mathcal{L}_X(a), \partial_{a}) \to (\mathcal{L}_Y, d_Y)$ is the
Quillen minimal model for the adjoint $A \colon S^p \times X \to Y$
of $\alpha.$  We prove
\begin{theorem} \label{thm:map*}
 Let $f \colon X \to Y$ be a map between simply connected CW complexes
 of finite type with $X$ finite.
The map $$\Psi' \colon \pi_{p}(\map_{*}(X, Y; f)) \to
H_{p}(\Der(\mathcal{L}_{X}, \mathcal{L}_{Y}; \mathcal{L}_{f}))$$
defined for $p > 1$ by (\ref{eq:Psi'}) preserves Whitehead products
where the latter space has the Whitehead product given by
\corref{cor:Wh based}.  Thus $\Psi'$ induces an isomorphism
$$ \pi_{*}(\map_{*}(X, Y;f)) \otimes \Q, \, \,  [ \, , \, ]_{w} \cong
H_{*}(\Der(\mathcal{L}_{X}, \mathcal{L}_{Y}; \mathcal{L}_{f})),\, \,  [ \, , \, ]_{w}$$
of rational Whitehead algebras in degrees $>1.$
\end{theorem}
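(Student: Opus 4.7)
The plan is to mirror the argument used in the proof of \thmref{thm:map}, adapting it to the based setting via the observation that for a based map, the $\chi$-component of the associated cycle in $\Rel(\ad_{\mathcal{L}_f})$ vanishes. Given $\alpha \in \pi_p(\map_*(X,Y;f))$ with based adjoint $A \colon S^p \times X \to Y$, the fact that $A$ collapses $S^p \times \{*_X\}$ to the basepoint of $Y$ forces the Quillen minimal model to be chosen so that $\mathcal{L}_A(a) = 0$; otherwise a direct computation produces $D_{\mathcal{L}_f}(\theta_a)(v) = \pm [\mathcal{L}_A(a), \mathcal{L}_f(v)]$, contradicting that $\theta_a = \mathcal{L}_A \circ S_a$ is a $D_{\mathcal{L}_f}$-cycle as required by the description of $\Psi'$. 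Consequently $\Psi'(\alpha)$ is represented by the pure-derivation cycle $\zeta^*_a = (0, \theta_a)$ of the type considered in \corref{cor:Wh based}.

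First I would establish the based analogue of \lemref{lemma:model1}: for $\alpha, \beta \in \pi_*(\map_*(X,Y;f))$ with based adjoints $A, B$, the map $\induceddd \colon (\mathcal{L}_X(a,b), \partial_{a,b}) \to (\mathcal{L}_Y, d_Y)$ defined by (\ref{eq:induced}) is a Quillen minimal model for the based map $(A \mid B)_f$. The argument would run exactly as in \lemref{lemma:model1}, via a pushout of the (now based) minimal models $\mathcal{L}_A$ and $\mathcal{L}_B$ along $\mathcal{L}_f$; the only change is that $\chi_a = \chi_b = 0$ here, which is automatically consistent with $\induceddd(a) = 0$ and $\induceddd(b) = 0$ in the formulae of (\ref{eq:induced}).

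Combining this with \lemref{lemma:model2}, the composite $\induceddd \circ \Gamma$ is then a Quillen minimal model for the based adjoint $C = (A \mid B)_f \circ (\eta \times 1)$ of $\gamma = [\alpha,\beta]_w$. Since $C$ is itself based, the associated cycle $\zeta_c \in \Rel_{p+q-1}(\ad_{\mathcal{L}_f})$ has $\chi_c = (-1)^p[\chi_a, \chi_b] = 0$, while
\[
\theta_c = \induceddd \circ \{\Theta_a, \Theta_b\} \circ \lambda = \{\zeta^*_a, \zeta^*_b\} = \lbr \theta_a, \theta_b \rbr
\]
by the definition (\ref{eq:zpair}) of $\{\zeta^*_a, \zeta^*_b\}$ together with the defining formula (\ref{eq:Wh Relad*}) of \corref{cor:Wh based}. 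Passing to homology then yields $\Psi'(\gamma) = \langle \theta_c \rangle = \lbr \langle \theta_a \rangle, \langle \theta_b \rangle \rbr = [\Psi'(\alpha), \Psi'(\beta)]_w$, as required.

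The main obstacle will be the initial claim that a based adjoint produces a Quillen model with $\mathcal{L}_A(a) = 0$ on the nose, rather than only up to homotopy. This is a mild technical point handled by a standard relative lifting argument: the restriction $\L(a;0) \hookrightarrow \mathcal{L}_X(a) \xrightarrow{\mathcal{L}_A} \mathcal{L}_Y$ models the constant map $S^p \to Y$, hence is null-homotopic, and may be straightened to the zero map by modifying $\mathcal{L}_A$ within its homotopy class (this consistency is already implicit in the description of $\Psi'$ in \cite{L-S2}). Once it is in place, the rest of the argument is a direct transcription of the proof of \thmref{thm:map}, with $\zeta^*$-cycles replacing $\zeta$-cycles throughout.
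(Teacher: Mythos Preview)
Your proposal is correct and follows essentially the same route as the paper's proof, which likewise invokes \lemref{lemma:model1} and \lemref{lemma:model2} together with the definition (\ref{eq:Wh Relad*}) to identify $\Psi'(\gamma)$ with $\langle \lbr \theta_a, \theta_b \rbr \rangle$. Note that no separate ``based analogue'' of \lemref{lemma:model1} is actually needed: the lemma applies verbatim with $\chi_a = \chi_b = 0$, and the paper simply cites it directly; your discussion of the technical point $\mathcal{L}_A(a)=0$ is a welcome expansion of what the paper leaves implicit via the reference to \cite{L-S2}.
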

\begin{proof}
Given  $\alpha,\beta \in \pi_*(\map_*(X, Y;f))$  of degrees $p$ and
$q$ with   Whitehead product $\gamma = [\alpha, \beta] \in
\pi_{p+q-1}(\map_*(X, Y;f)),$ the class  $\Psi'(\gamma)$  is
represented by
$$(\zeta^*_a \mid
\zeta^*_b)_{\mathcal{L}_f} \circ \Gamma  \circ S_c
 = \lbr \theta_{a}, \theta_{b} \rbr
    \in  \Der_{p+q-1}(\mathcal{L}_X, \mathcal{L}_Y;
    \mathcal{L}_f)$$
   by Lemmas \ref{lemma:model1} and \ref{lemma:model2}  and definition of the bilinear pairing $\lbr \, , \, \rbr$
  in (\ref{eq:Wh Relad*}).
    The result now follows from \corref{cor:Wh based}.
\end{proof}

\begin{remark}
For $\alpha_1, \ldots, \alpha_n \in \pi_{*}(\map(X, Y; f))$, let
$$w(\alpha_1, \ldots, \alpha_n) = \left[ \left[ \ldots \left[[\alpha_1, \alpha_2]_w,
\alpha_3\right]_w, \ldots, \alpha_{n-1}\right]_w, \alpha_n \right]_w$$
for their ``left-justified" iterated Whitehead product. The argument
above may easily be extended, using the algebraic universal
Whitehead product indicated in (\ref{eq:universal example
iterated}), and the
 topological universal example for such Whitehead products, namely
$$w(\iota_1, \ldots, \iota_n) \colon S^{p_1+\cdots + p_n -n+1} \to S^{p_1} \vee \cdots \vee
 S^{p_n }.$$
Using \propref{pro:3} and the above arguments,  we may show that the algebraic iterated Whitehead product indicated in  (\ref{eq:Whitehead product iterated})  in Section \ref{sec:iterated}
corresponds with
$w(\alpha_1, \ldots, \alpha_n)$ under the map $\Phi'$.  We have no
immediate need for this, and so we omit details.
\end{remark}

\section{Whitehead Length of Function Space Components} \label{sec:applications}
We apply our formulae to  study  the Whitehead length of function space components.
To begin, we make some remarks concerning the sensitivity of the invariants
$$\WL(\map(X, Y;f)) \hbox{\ \ and \, } \WL(\map_*(X, Y; f))$$
to the (homotopy class of the) map $f \colon X \to Y.$ For example,
in the case $X = Y$ and $f=1$, the space $\map(X, X; 1)$ is  a
topological monoid, and so $\WL(\map(X, X;1)) = 1.$  When $Y$ is an
$H$-space, so too is $\map(X, Y)$, and hence $\WL(\map(X, Y;f) = 1$
for any component.  Dually, if $X$ is a co-$H$-space, then
$\map_*(X, Y)$ is an $H$-space, and hence $\WL(\map_*(X, Y;f)) = 1$
for any component of the based mapping space. On the other hand, we
have the following   fact concerning the null-component which shows
that we may easily have an abundance of non-zero Whitehead products
in the free function space. Recall that we are only considering
$\pi_{\geq 2}\big(\map(X, Y;f)\big)$ here.

\begin{theorem} \label{thm:WL null component}
Let $Y$ be any space. Then
$$\max \{\WL(Y), \WL(\map_*(X, Y; 0)) \} \leq \WL(\map(X, Y; 0)).$$
If the universal cover of $Y$ has finite rational type then
$$ \WL_\Q(\map(X, Y;0))  = \WL_\Q(Y).$$
\end{theorem}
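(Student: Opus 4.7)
The plan is to establish the three inequalities separately.  For $\WL(Y) \leq \WL(\map(X, Y; 0))$, the constant-map inclusion $c \colon Y \to \map(X, Y; 0)$, sending $y$ to the constant function $c_y$, is a right inverse to the basepoint evaluation $\ev_{x_0} \colon \map(X, Y; 0) \to Y$; hence $c_* \colon \pi_{\geq 2}(Y) \to \pi_{\geq 2}(\map(X, Y; 0))$ is a split injection, and being induced by a based map it preserves Whitehead products.  The same $c$ splits the evaluation fibration $\map_*(X, Y; 0) \hookrightarrow \map(X, Y; 0) \xrightarrow{\ev_{x_0}} Y$, so the inclusion of the based mapping space into the free one is also split-injective on $\pi_{\geq 2}$ and compatible with brackets, yielding $\WL(\map_*(X, Y; 0)) \leq \WL(\map(X, Y; 0))$.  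In either case, a non-zero iterated Whitehead bracket of length $k$ in the domain transports to a non-zero iterated Whitehead bracket of length $k$ in the codomain.

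For the second assertion, $\WL_\Q(Y) \leq \WL_\Q(\map(X, Y; 0))$ is a special case of the first part, so only the reverse inequality remains.  Under the hypothesis that the universal cover of $Y$ has finite rational type, I would invoke the formula of Vigu\'e-Poirrier \cite{V}, extended to full generality by Buijs and Murillo \cite{B-M}: there is an isomorphism
$$\pi_n\big(\map(X, Y; 0)\big) \otimes \Q \;\cong\; \bigoplus_{i \geq 0} \pi_{n+i}(Y) \otimes H^i(X; \Q)$$
for $n \geq 2$, and under this identification the rational Whitehead product obeys
$$\big[\alpha \otimes x,\; \beta \otimes y\big]_w \;=\; \pm\, [\alpha, \beta]_w \otimes (x \smile y).$$
Iterating, a $k$-fold left-justified Whitehead bracket of elementary tensors equals a sign times an iterated Whitehead bracket in $\pi_*(Y) \otimes \Q$ tensored with an iterated cup product in $H^*(X; \Q)$.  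If $k > \WL_\Q(Y)$ the first factor vanishes for every choice of the $\alpha_i$, and by linearity every $k$-fold bracket in $\pi_*(\map(X, Y; 0)) \otimes \Q$ vanishes.  Hence $\WL_\Q(\map(X, Y; 0)) \leq \WL_\Q(Y)$, completing the equality.

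The only substantive step is the appeal to \cite{V} and \cite{B-M} for the tensor-product description of both the rational homotopy and of the Whitehead product, which is precisely the place where the finite-rational-type hypothesis on the universal cover is used.  Alternatively, one could derive the required formula internally from the machinery of Section \ref{sec:algebra}: for $f = 0$ one has $\mathcal{L}_f = 0$ and hence $\ad_{\mathcal{L}_f} = 0$, so that $\Rel(\ad_0)$ splits as $(\mathcal{L}_Y, d_Y) \oplus (\Der(\mathcal{L}_X, \mathcal{L}_Y; 0), D_0)$ as a chain complex.  The explicit expression (\ref{eq:Wh prod on v}) for $\{\Theta_a, \Theta_b\} \circ \lambda$ then reduces the pairing $\lbr\,,\,\rbr$ of (\ref{eq:Wh Relad}) to the above tensor form, and \thmref{thm:map} delivers the same conclusion.
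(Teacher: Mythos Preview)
Your argument for the first inequality is the same as the paper's: both use the section of the evaluation fibration given by constant maps to see that $Y$ is a retract of $\map(X,Y;0)$ and that the fibre inclusion is split-injective on homotopy.

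For the rational equality your route is genuinely different.  You invoke the Vigu\'e-Poirrier/Buijs--Murillo description of the rational homotopy Lie algebra of the null component, so that an iterated bracket in $\pi_*(\map(X,Y;0))\otimes\Q$ is, up to sign, an iterated Whitehead bracket in $\pi_*(Y)\otimes\Q$ tensored with an iterated cup product in $H^*(X;\Q)$, and conclude by reading off bracket length.  The paper instead passes to loop spaces: it uses the Brown--Szczarba identification $\Omega_0\map(X,Y;0)\simeq \map(X,\Omega_0 Y;0)$, then the equality $\Hnil(\map(X,\Omega_0 Y;0))=\Hnil(\Omega_0 Y)$ from \cite{LPSS}, and finally Salvatore's identity $\Hnil(\Omega_0 Y_\Q)=\WL_\Q(Y)$.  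Your approach is more direct and in the spirit of the explicit bracket formulae developed in this paper (indeed your alternative sketch, specializing the machinery of \secref{sec:algebra} to $\mathcal{L}_f=0$, would recover exactly the tensor formula you quote).  The paper's approach, on the other hand, is independent of any bracket formula and ties the result to homotopical nilpotency of loop spaces; it also avoids needing the full Lie-algebra description of the null component.  Both arguments are correct and comparably short, each resting on a single external black box.
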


\begin{proof}
The first inequality follows from the evaluation fibration
$\map_*(X, Y; 0) \to \map(X, Y;0) \to Y$.  On the one hand, the
obvious section $s \colon Y \to \map(X, Y; 0)$ implies that $Y$ is a
retract of $\map(X, Y; 0)$. On the other hand, $s$ implies that the
fibre inclusion $\map_*(X, Y; 0) \to \map(X, Y; 0)$ induces an
injection on homotopy groups.

For the rational result, we start with a nice observation of
Brown-Szczarba \cite{B-S97b}: writing $\Omega_0 Y$ for the connected
component of the constant loop in $\Omega Y$, we have
$\Omega_0(\map(X, Y;0)) \approx \map(X, \Omega_0 Y; 0).$ Next, by
\cite[Th.4.10]{LPSS} $$\Hnil(\map(X, \Omega_0 Y; 0)) =
\Hnil(\Omega_0 Y)$$ where $\Hnil(G)$ of a loop-space $G$ denotes the
{\em homotopical nilpotency} of $G$ in the sense of Berstein-Ganea
\cite{B-G}.  Taking $Y = Y_\Q$ the result follows from the identity
$\Hnil(\Omega_0 Y_\Q ) = \WL_\Q(Y)$ \cite[Th.3]{Sal}.
\end{proof}

Recall that a simply connected space $Y$ is  {\em coformal}  if
there is a DG Lie algebra map $\rho \colon (\mathcal{L}_Y, d_Y) \to
(\pi_*(\Omega Y) \otimes \Q, 0)$ inducing an isomorphism on homology
(see \cite[p.334 Ex.7]{F-H-T}).

\begin{theorem} \label{thm:Y coformal}
Let $X$ be a finite simply connected CW complex and
$Y$ a simply connected coformal complex of finite type.  Then
for all $f \colon X \to Y$ we have
$$\max \{ \WL_\Q(\map_*(X, Y;f)), \WL_\Q(\map(X, Y;f)) \} \leq \WL_\Q(Y).$$
\end{theorem}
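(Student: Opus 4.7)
The plan is to translate the Whitehead length inequalities into algebraic statements via the identifications of \secref{sec:formula}, reduce to the target $(\pi_*(\Omega Y)\otimes\Q, 0)$ using coformality, and then establish a combinatorial length bound on the universal iterated Whitehead product from \secref{sec:iterated}.

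First I would invoke \thmref{thm:map} and \thmref{thm:map*} to translate the statement into the claim that every iterated Whitehead product of $n > \WL_\Q(Y)$ cycles vanishes in both $H_*(\Rel(\ad_{\mathcal{L}_f}))$ and $H_*(\Der(\mathcal{L}_X, \mathcal{L}_Y; \mathcal{L}_f))$. Coformality of $Y$ provides a DG Lie algebra quasi-isomorphism $\rho \colon \mathcal{L}_Y \to K := (H_*(\mathcal{L}_Y), 0) = (\pi_*(\Omega Y)\otimes \Q, 0)$. After arranging $\rho$ to be surjective by a standard factoring argument in the model category of DG Lie algebras, \thmref{thm:lifting} lets me replace $\mathcal{L}_f$ by $\bar{f} = \rho \circ \mathcal{L}_f \colon \mathcal{L}_X \to K$ without affecting the Whitehead algebras in question. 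Since Samelson brackets in $\pi_*(\Omega Y)\otimes \Q$ correspond to rational Whitehead products in $\pi_{\geq 2}(Y)\otimes\Q$ and $Y$ is coformal, $\WL_\Q(Y)$ equals the Lie nilpotency class of $K$, so every iterated Lie bracket of length $>\WL_\Q(Y)$ in $K$ is zero.

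The key technical claim is the following length bound on the universal example. With $\mathcal{L}_X = \L(V; d_X)$ and $v \in V$,
$$
w(\Theta_{a_1}, \ldots, \Theta_{a_n})\circ\lambda(v) \in \mathcal{L}_X(a_1, \ldots, a_n)
$$
is a sum of Lie monomials each of which contains, for every $i \in \{1, \ldots, n\}$, at least one letter from $\{a_i\}\cup V^{a_i}$; in particular each such monomial has Lie length at least $n$. I plan to prove this by induction on $n$, expanding $w(\Theta_{a_1}, \ldots, \Theta_{a_n}) = \{w(\Theta_{a_1}, \ldots, \Theta_{a_{n-1}}), \Theta_{a_n}\} = \pm[\,\cdot\,, D(\Theta_{a_n})]$ as in formula (\ref{eq:Wh prod on v}). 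Three observations drive the induction: (i) each $\Theta_{a_i}$ annihilates every $a_j$ and every element of $V^{a_j}$, so $W(a_n) = 0$ for any iterated bracket $W$ of the $\Theta_{a_i}$ with $i < n$, and the summand $W\circ D(\Theta_{a_n})\circ\lambda(v) = \pm[a_n, W(v)]$ inherits $a_n$-presence plus the $a_{<n}$-presence of $W(v)$; (ii) the differential $\partial$ on $\mathcal{L}_X(a_1, \ldots, a_n)$ strictly increases Lie monomial length on every generator --- this is precisely where minimality of $\mathcal{L}_X$, via $d_X(V)\subseteq [\L(V), \L(V)]$, is used --- and preserves the set of $a_i$-related letters in each monomial; (iii) $\Theta_{a_n}$ preserves monomial length and contributes one $V^{a_n}$-letter whenever it acts non-trivially.

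With the claim in hand, I push the universal example forward to $K$ via the DG Lie algebra map $(\zeta_I)_{\bar{f}} \colon \mathcal{L}_X(a_1, \ldots, a_n) \to K$ of \propref{pro:3}, which is a substitution on generators ($a_i \mapsto \pm\chi_{a_i}$, $S_{a_i}(v')\mapsto \theta_{a_i}(v')$, $v' \mapsto \bar{f}(v')$) and so preserves Lie monomial length. By formula (\ref{eq:Whitehead product iterated}), the second coordinate of the $n$-fold iterated Whitehead product in $\Rel(\ad_{\bar{f}})$ is then a sum of iterated Lie brackets in $K$ of length $\geq n$, and for $n > \WL_\Q(Y)$ each such bracket vanishes; the first coordinate $\pm(\zeta_I)_{\bar{f}}(w(a_1, \ldots, a_n))$ is itself an $n$-fold iterated bracket of the $\chi_{a_i}$ in $K$ and vanishes for the same reason. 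The based case follows from the specialization $\chi_{a_i} = 0$ via \corref{cor:Wh based}, and the free case yields the $\map(X,Y;f)$ inequality. The main obstacle is the combinatorial claim above; controlling signs and the detailed interaction of $\partial$ with the mixed generators $S_{a_i}(v')$ for $i < n$ is where the real work lies.
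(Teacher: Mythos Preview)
Your proposal is correct and follows essentially the same approach as the paper: use \thmref{thm:lifting} and coformality to replace the target by $(H(\mathcal{L}_Y),0)$, then show via the explicit formula~(\ref{eq:Wh prod on v}) that an $n$-fold iterated Whitehead product is represented by elements of Lie bracket length $\geq n$ in $H(\mathcal{L}_Y)$, which vanish once $n>\WL_\Q(Y)$. The only difference is packaging---the paper runs the induction on the notion of a ``cycle of length $\geq r$ in $H(\mathcal{L}_Y)$'' (i.e.,\ both $\chi$ and each $\theta(v)$ have bracket length $\geq r$) and shows a single Whitehead bracket raises this by one, whereas you carry out the equivalent combinatorial analysis directly on the universal example $w(\Theta_{a_1},\dots,\Theta_{a_n})\circ\lambda$; your observations (i)--(iii) are exactly what drives the paper's one-step induction.
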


\begin{proof}
By \thmref{thm:lifting}, we may replace $(\mathcal{L}_Y, d_Y)$ by
$(H(\mathcal{L}_Y), 0) = (\pi_*(\Omega Y) \otimes \Q, 0)$ when we
apply \thmref{thm:map}.  We say that a cycle $\zeta = (\chi, \theta)
\in \Rel_*(\ad_{\mathcal{L}_f})$ is \emph{of length $\geq r$ in
$H(\mathcal{L}_Y)$} if $\chi \in H(\mathcal{L}_Y)$ is of bracket
length $\geq r$ and, when applied to a generator $v \in \L(V) =
\mathcal{L}_X$, $\theta(v)$ is also of bracket length $\geq r$ in
$H(\mathcal{L}_Y)$.  The result is proved by arguing that iterated
Whitehead products in $H_*(\Rel(\ad_{\mathcal{L}_f}))$ of length $r$
are represented by cycles of length $\geq r$ in $H(\mathcal{L}_Y)$.

To see this, consider two cycles $\zeta_a$ and $\zeta_b$, and
suppose that $\zeta_a$ is of length $\geq r$ in $H(\mathcal{L}_Y)$.
According to (\ref{eq:Wh prod on v}), $\{ \Theta_a, \Theta_b
\}\circ\lambda(v)$ is contained in the ideal of $\mathcal{L}_X(a,b)$
generated by $a$ and $S_a(v)$, and also is of length $\geq 2$.
Therefore, when the map $(\zeta_a\mid\zeta_b)_{\mathcal{L}_f}$ is
applied to it, we obtain an element of bracket length $\geq (r+1)$
in $H(\mathcal{L}_Y)$.  Likewise for the bracket $[a,b]$.  It
follows that a cocycle representative of $\lbr \zeta_a, \zeta_b
\rbr$ is of length $\geq (r+1)$ in $H(\mathcal{L}_Y)$.  An easy
induction using this completes the proof.
\end{proof}

We say a simply connected CW complex $X$ is a {\em rational
co-$H$-space} if $X_\Q$ is homotopy equivalent to a wedge of
spheres. We remarked above that, if $X$ is a co-$H$-space, then
Whitehead products vanish in any component of the based mapping
space.  The following result provides a large class of examples of
free function spaces with vanishing rational Whitehead products.

\begin{theorem} \label{thm:coformal abelian} Let $X$ be a finite rational
co-$H$-space and $Y$ a simply connected, coformal complex of finite
type. Suppose $f$ induces a surjection on rational homotopy groups.
Then $\WL_\Q(\map(X, Y;f)) = 1.$
\end{theorem}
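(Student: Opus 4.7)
The plan is to invoke \thmref{thm:map} after simplifying the two Quillen models via the hypotheses, and then to check that both coordinates of the pairing (\ref{eq:Wh Relad}) vanish at the chain level for any pair of cycles.

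Since $X$ is a finite rational co-$H$-space, its Quillen minimal model takes the form $\mathcal{L}_X = \L(V; 0)$ with $d_X = 0$. Since $Y$ is coformal, by \thmref{thm:lifting} we may replace $\mathcal{L}_Y$ by $(L_Y, 0)$, where $L_Y = \pi_*(\Omega Y) \otimes \Q$; this is the same replacement carried out in the proof of \thmref{thm:Y coformal}. After this replacement, $\mathcal{L}_f \colon \L(V; 0) \to (L_Y, 0)$ is a graded Lie algebra homomorphism (both differentials being zero), and the hypothesis that $f$ is surjective on rational homotopy groups translates, at the chain level, to $\mathcal{L}_f$ being surjective. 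In particular $\mathcal{L}_f(V)$ generates $L_Y$ as a graded Lie algebra.

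The key observation is that, because $d_X = 0$ and $d_Y = 0$, the differential $D_{\mathcal{L}_f}$ on $\Der(\mathcal{L}_X, \mathcal{L}_Y; \mathcal{L}_f)$ is identically zero. Hence for any $\delta_{\ad_{\mathcal{L}_f}}$-cycle $\zeta_a = (\chi_a, \theta_a)$ the cycle condition reduces to $\ad_{\mathcal{L}_f}(\chi_a) = 0$, i.e.\ $[\chi_a, \mathcal{L}_f(v)] = 0$ for every $v \in V$; since $\mathcal{L}_f(V)$ generates $L_Y$, this forces $\chi_a$ into the center of $L_Y$.

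Given two cycles $\zeta_a$ and $\zeta_b$ of degrees $p$ and $q$, the first coordinate of $\lbr \zeta_a, \zeta_b \rbr$ in (\ref{eq:Wh Relad}) is $(-1)^p[\chi_a, \chi_b]$, which vanishes by centrality. For the second coordinate, formula (\ref{eq:Wh prod on v}) collapses (the last summand disappears since $dv = 0$) to a combination of $[b, S_a(v)]$ and $[a, S_b(v)]$; applying $(\zeta_a \mid \zeta_b)_{\mathcal{L}_f}$ then produces a combination of $[\chi_b, \theta_a(v)]$ and $[\chi_a, \theta_b(v)]$, both of which are zero by centrality of $\chi_a$ and $\chi_b$ in $L_Y$. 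Thus $\lbr \zeta_a, \zeta_b \rbr = (0, 0)$ on the nose, and \thmref{thm:map} yields $\WL_\Q(\map(X, Y; f)) = 1$. The main item to handle with care is simply the passage from the three topological hypotheses to the model-theoretic set-up ($d_X = d_Y = 0$ together with $\mathcal{L}_f$ surjective); the resulting vanishing of $\lbr\,,\,\rbr$ is then a direct unwinding of the definitions.
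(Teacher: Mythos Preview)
Your proof is correct and follows essentially the same route as the paper's: reduce to $d_X=0$, $d_Y=0$, and $\mathcal{L}_f$ surjective, deduce that the first coordinate $\chi_a$ of any cycle is central, and then use (\ref{eq:Wh prod on v}) with $dv=0$ to see that both coordinates of $\lbr\zeta_a,\zeta_b\rbr$ vanish. The only cosmetic difference is in how centrality of $\chi_a$ is extracted: you read it off directly from the cycle condition in $\Rel(\ad_{\mathcal{L}_f})$ (since $D_{\mathcal{L}_f}=0$ forces $\ad_{\mathcal{L}_f}(\chi_a)=0$), whereas the paper derives the same equation from the fact that $(\zeta_a\mid\zeta_b)_{\mathcal{L}_f}$ commutes with differentials applied to $S_a(v)$; these are the same identity in two guises.
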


\begin{proof}
Since $X$ is a rational co-$H$-space, the differential $d_X$ in the
Quillen minimal model for $X$ vanishes.  Since $Y$ is coformal, we
may replace $(\mathcal{L}_Y, d_Y)$ by $(H(\mathcal{L}_Y), 0)$, and
view $\mathcal{L}_f$ as a map $\mathcal{L}_X \to H(\mathcal{L}_Y)$,
when we apply \thmref{thm:map}.  Suppose given a pair $\zeta_a =
(\chi_a, \theta_a) \in \Rel_p(\ad_{\mathcal{L}_f})$ and $\zeta_b
=(\chi_b, \theta_b) \in \Rel_q(\ad_{\mathcal{L}_f})$ of
$\delta_{\ad_{\mathcal{L}_f}}$-cycles; with $\chi_a, \chi_b \in
H(\mathcal{L}_Y)$ and $\theta_a, \theta_b \in
\Der_{*}(\mathcal{L}_X, H(\mathcal{L}_Y); \mathcal{L}_f)$.  Using
the fact that
$$(\zeta_a\mid\zeta_b)_{\mathcal{L}_f} \colon \mathcal{L}_X(a,b) \to
H(\mathcal{L}_Y)$$
is a DG Lie algebra map, we see that $[\chi_a, \mathcal{L}_f(v)] =
\pm (\zeta_a\mid\zeta_b)_{\mathcal{L}_f}\big( \partial_{a,b} S_a (v)
\big) = 0$, for any $v \in \mathcal{L}_X$.  By assumption,
$\mathcal{L}_f$ is surjective and it follows that the bracket of
$\chi_a$ with any element of $H(\mathcal{L}_Y)$ is zero.  A similar
argument yields the same conclusion for $\chi_b$.  Finally, we
obtain from (\ref{eq:Wh prod on v}) that $\{ \Theta_a, \Theta_b
\}\circ\lambda(v) = \pm [b, S_a(v)] \pm [a, S_b(v)]$.  It follows
that in $H_*\big(\Rel(\ad_{\mathcal{L}_f})\big)$, we have
$$\lbr \zeta_a, \zeta_b \rbr = \big( \pm [\chi_a, \chi_b], \pm [\chi_a,
\theta_b(v)] \pm [\chi_b, \theta_a(v)]\big) = (0,0).$$
The result follows from \thmref{thm:map}.
\end{proof}

In \cite{Gan60}, Ganea proved $\WL(\map_*(X, Y; 0)) \leq \cat(X).$
We give a rational version of this inequality which applies to all
components. Recall the {\em  rational cone length} $\Cl_0(X)$ of a
space $X$ is   the least integer $n$ such that  $X$ has the rational
homotopy type of an $n$-cone (see \cite[p.359]{F-H-T}). Spaces of
rational cone length $1$ then correspond to rational co-$H$-spaces.

\begin{theorem} \label{thm:cone}
Let $X$ be a finite CW complex and
$Y$ a simply connected  complex of finite type. Then $\WL_\Q(\map_*(X,
Y;f)) \leq \Cl_0(X)$ for all maps $f \colon X \to Y.$
\end{theorem}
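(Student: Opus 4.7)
The plan is to apply \thmref{thm:map*} to reduce the statement to an inequality for the Whitehead length of $H_*(\Der(\mathcal{L}_X, \mathcal{L}_Y; \mathcal{L}_f))$ under the pairing of \corref{cor:Wh based}, and then proceed by induction on $n = \Cl_0(X)$. For the base case $n = 1$, $X$ is rationally a wedge of spheres, so $\map_*(X, Y; f)$ is rationally a product of iterated loop spaces and hence an $H$-space, giving $\WL_\Q \leq 1$.

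For the inductive step, I would realize $X$ rationally as $X' \cup_\phi CA$ with $\Cl_0(X') \leq n-1$ and choose a Quillen model $\mathcal{L}_X = \L(V' \oplus \tilde V)$ reflecting this cone structure: the sub-DG-Lie subalgebra $\mathcal{L}_{X'} = \L(V')$ is closed under $d$, while $d \tilde V \subset \L(V')$ (the attaching maps of the new cells land in the previous stage). Invariance of the derivation homology under quasi-isomorphism---an analogue of \thmref{thm:lifting} in the first variable---justifies working with such a (possibly non-minimal) model. Restriction of derivations then yields a short exact sequence of chain complexes
$$0 \to K \to \Der(\mathcal{L}_X, \mathcal{L}_Y; \mathcal{L}_f) \xrightarrow{r} \Der(\mathcal{L}_{X'}, \mathcal{L}_Y; \mathcal{L}_{f|_{X'}}) \to 0,$$
where $K = \{\theta : \theta|_{V'} = 0\}$, and naturality of the universal-example construction (as in \remref{rem:zeta}, applied to the inclusion $\mathcal{L}_{X'} \hookrightarrow \mathcal{L}_X$) shows that $r_*$ commutes with the Whitehead products in homology.

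The key step---which I anticipate as the main technical obstacle---is to verify that for any $\mathcal{L}_f$-derivation cycles $\theta \in K$ and $\theta' \in \Der(\mathcal{L}_X, \mathcal{L}_Y; \mathcal{L}_f)$, the bracket $\lbr \theta, \theta' \rbr$ already vanishes at the chain level. Using (\ref{eq:Wh prod on v}) together with the based-case simplification $\chi_\theta = \chi_{\theta'} = 0$, applying the DG Lie algebra map $(\zeta^*_\theta \mid \zeta^*_{\theta'})_{\mathcal{L}_f}$ to $\{\Theta_a, \Theta_b\}\circ\lambda(v)$ kills the $[b, S_a(v)]$ and $[a, S_b(v)]$ summands, since both $a$ and $b$ map to $0$. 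The computation then reduces to showing $(\zeta^*_\theta \mid \zeta^*_{\theta'})_{\mathcal{L}_f}(\Theta_b \Theta_a(dv)) = 0$. Expanding by the derivation rule, each summand of $\Theta_b \Theta_a(dv)$ is a bracket in $\mathcal{L}_X(a,b)$ containing a factor $S_a(v_i)$, where $v_i$ appears as one of the $V$-generators occurring in the bracket expression of $dv$. Since $dv \in \L(V')$ (both for $v \in V'$ by closure of $\mathcal{L}_{X'}$ under $d$ and for $v \in \tilde V$ by the filtration), every such $v_i$ lies in $V'$, so $(\zeta^*_\theta \mid \zeta^*_{\theta'})_{\mathcal{L}_f}(S_a(v_i)) = \theta(v_i) = 0$, and the entire sum collapses.

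To conclude, given classes $\alpha_1, \ldots, \alpha_{n+1}$ in $H_*(\Der(\mathcal{L}_X, \mathcal{L}_Y; \mathcal{L}_f))$, I would set $\beta = [\alpha_1, \ldots, \alpha_n]_w$. Then $r_*(\beta)$ is a length-$n$ iterated Whitehead product in $H_*(\Der(\mathcal{L}_{X'}, \mathcal{L}_Y; \mathcal{L}_{f|_{X'}}))$, which vanishes by the inductive hypothesis applied to $X'$. The long exact homology sequence of the short exact sequence above then furnishes a cycle representative of $\beta$ lying in $K$. Pairing this representative with any representative of $\alpha_{n+1}$ is zero at the chain level by the key step, so $[\beta, \alpha_{n+1}]_w = 0$ in homology, completing the induction.
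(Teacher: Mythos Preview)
Your proposal is essentially correct and rests on the same core computation as the paper: using (\ref{eq:Wh prod on v}), in the based case $\lbr\theta_a,\theta_b\rbr(v)$ reduces to $(\zeta^*_a\!\mid\!\zeta^*_b)_{\mathcal{L}_f}\big(\Theta_b\Theta_a(dv)\big)$, and the cone-length filtration $d_X(V(i))\subseteq\L(V(i-1))$ then forces vanishing. The packaging, however, differs. The paper works directly with the full filtration $\{0\}\subset V(1)\subset\cdots\subset V(n)=V$ on the \emph{minimal} model (available by \cite[Th.29.1]{F-H-T}) and inducts on the \emph{length of the Whitehead product}: a length-$r$ product is represented by a cycle vanishing on $V(r-1)$, so a length-$(n{+}1)$ product vanishes on $V(n)=V$. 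No short exact sequence, no restriction map, no change of model.

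Your induction on $\Cl_0(X)$ is more elaborate and carries one genuine loose end: you invoke ``an analogue of \thmref{thm:lifting} in the first variable'' to justify passing to a possibly non-minimal model. That statement is neither proved in the paper nor immediate (the proof of \thmref{thm:lifting} uses minimality of the \emph{domain} in an essential way). You can sidestep this entirely: by \cite[Th.29.1]{F-H-T} the minimal model itself carries the filtration, so take $V'=V(n-1)$. But then $\L(V')$ need not be the minimal model of a space $X'$, so to apply your inductive hypothesis you must phrase it algebraically (for free DG Lie algebras with a length-$(n{-}1)$ filtration) rather than topologically (for $\map_*(X',Y;f')$). Once you do that, your argument collapses to the paper's direct one, with the short exact sequence and the map $r_*$ becoming decorative.
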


\begin{proof}
Let $n = \Cl_0(X).$ By \cite[Th.29.1]{F-H-T}, the underlying vector space $V$
of the Quillen minimal model
 of $X$ admits a filtration $\{ 0 \} \subset V(1) \subset V(2)
 \subset \cdots \subset V(n) = V$ where $d_X(V(i)) \subseteq
 \L(V(i-1)).$  The result is proved by arguing that iterated
Whitehead products in $H_*(\Der(\mathcal{L}_X, \mathcal{L}_Y;
\mathcal{L}_f))$ of length $r$ are represented by cycles that vanish
on $V(r-1)$.  We argue in a similar fashion to the proof of
\thmref{thm:Y coformal}.

Consider two cycles $\theta_a, \theta_b \in \Der(\mathcal{L}_X,
\mathcal{L}_Y; \mathcal{L}_f)$.  Recall that their Whitehead product
is represented by the image of the universal example
$$\{\Theta_{a}, \Theta_{b} \}\circ\lambda
\in \Der(\mathcal{L}_X, \mathcal{L}_X(a, b);\lambda)$$
under the map
$$\left((\zeta_{a}^*\mid\zeta_{b}^*)_{\mathcal{L}_f}
\right)_* \colon \Der(\mathcal{L}_X, \mathcal{L}_X(a, b); \lambda)
\to \Der(\mathcal{L}_X, \mathcal{L}_Y; \mathcal{L}_f).$$ Here
$\zeta_{a}^* = (0, \theta_{a})$ and $\zeta_{b}^* = (0, \theta_{b})$
in $\Rel(\ad_{\mathcal{L}_f})$.  In particular,
$(\zeta_{a}^*\mid\zeta_{b}^*)_{\mathcal{L}_f}$ maps $a$ and $b$ to
zero.

From (\ref{eq:Wh prod on v}), we see that $\{ \Theta_a, \Theta_b
\}\circ\lambda(v) \equiv \pm \Theta_b \circ \Theta_a (d_X(v))$
modulo terms in the ideal generated by $a$ and $b$.  Now assume that
$\Theta_a$ vanishes on $V(r)$.  Since $d_X(V(r+1)) \subseteq
 \L(V(r))$, we have that $\{ \Theta_a, \Theta_b
\}\circ\lambda$ vanishes on $V(r+1)$. An easy induction using this
completes the proof.
\end{proof}

We next give a complete calculation of the rational Whitehead length of function spaces in a special case.    Let   $X$ be  a simply
connected, finite complex and $S^n$ a sphere with $n \geq 2$.
When $n$ is odd, $S^n$ is a
rational $H$-space and hence so too is $\map(X, S^n)$.  It follows
that, after rationalization, each component of $\map(X, S^n)$ is
homotopy equivalent to the null component, which itself is an
$H$-space; in particular we have $\WL_\Q(\map(X, S^n; f)) =1$.
Identical remarks apply to the based function space.

When  $n$ is even, the rational homotopy types of components $\map(X, S^n; f)$ are more complicated. A complete description
   for    $X$ rationally
$(2n+1)$-co-connected is given by  M{\o}ller-Raussen \cite[Th.1]{M-R}.
We compute the rational Whitehead length of all components in both the based and free setting without   dimension restriction on $X$.

Since $S^n$ is a coformal space, $\WL_\Q(\map_*(X, S^n; f))$ and
$\WL_\Q(\map(X, S^n; f))$ are each equal to either $1$ or $2$---but
not \emph{a priori} equal to each other---by \thmref{thm:Y
coformal}. Suppose first that $H(f; \Q) = 0 \colon H^*(S^n; \Q) \to
H^*(X; \Q)$. Then the rationalization of $f$ factors through the
fibre $K(\Q, 2n-1)$ of the    Postnikov decomposition $K (\Q,2n-1)
\to (S^{n})_\Q \to K(\Q, n)$. This implies $f$ is a {\em rationally
cyclic map} (see \cite[Def.2.4 and Ex.4.4]{L-S1}). By
\cite[Th.3.7]{L-S3}, the evaluation fibration $\omega \colon \map(X,
S^n;f) \to S^n$ is then rationally fibre-homotopically trivial and,
from the long exact homotopy sequence,  we have an isomorphism of
Whitehead algebras:
$$\pi_*(\map(X, S^n; f)) \otimes \Q, [ \, , \, ]_w \cong
\left( \pi_*(\map_*(X, S^n; f)) \otimes \Q, [ \, , \, ]_w \right)
\oplus \left(\pi_*(S^n) \otimes \Q,  [ \, , \, ]_w \right).$$ Since
$\WL_\Q(S^n) = 2$, we have $\WL_\Q(\map(X, S^n; f)) = 2$ and
$\WL_\Q(\map_*(X, S^n; f))$ is equal to either $1$ or $2$ in this
case.

Suppose $H(f; \Q)  \neq 0.$
Write $(\mathcal{L}_{S^n}, d_{S^n}) = \L(u;
0)$ with $|u| = n-1$ and $\mathcal{L}_X = \L(V ;d_X).$
 The condition $H(f; \Q) \neq 0$ translated to Quillen models
implies there exists $v \in V_{n-1}$ with
  $\mathcal{L}_f(v) =u$.    This implies there are no
  cycles $(u, \theta) \in \Rel_n(\ad_{\mathcal{L} _f})$
because $\ad_{\mathcal{L}_f}(u)+D_{\mathcal{L}_f}(\theta)$ cannot
equal zero: note that $\theta(d_X(v)) = 0$ for degree reasons, and
then applied to $v$ we obtain that
$$\ad_{\mathcal{L}_f}(u)(v) + D_{\mathcal{L}_f}(\theta)(v) =
[u, \mathcal{L}_f(v)] + d_{S^n}\theta(v) \pm \theta(d_X(v)) = [u, u]
\neq 0 \in (\mathcal{L}_{S^n})_{2n-2}.$$  By the formula for the Whitehead
product in $H_*(\Rel(\ad_{\mathcal{L}_f}))$   (\thmref{thm:Wh main}),
we see
directly that the cycle $([u,u], 0) \in \Rel_{2n-1}(\ad_{\mathcal{L} _f})$
does not represent a Whitehead product.   Translating back, this means
$$\pi_*(\map(X, S^n; f)) \otimes \Q, [ \, , \, ]_w \cong \left( \pi_*(\map_*(X, S^n; f)) \otimes \Q, [ \, , \, ]_w \right)  \oplus \left(\Q([\iota, \iota]_w), 0 \right)$$
where $\iota  \in \pi_n(S^n)$ is nontrivial and $(\Q([\iota, \iota]_w), 0)$
denotes the abelian Whitehead algebra generated in degree $2n-1$.
Thus in this case $$ \WL_\Q(\map(X, S^n; f)) = \WL_\Q(\map_*(X, S^n; f)) = \hbox{ $1$ or $2.$}$$
In both cases,  the relevant question is the rational Whitehead length
of  the based function space. We address this question as an application of our formula:

\begin{theorem} \label{thm:sphere} Let $X$ be a finite, simply
connected CW complex and $f \colon X \to S^n$ a based map with $n$
even. Then
$ \WL_\Q(\map_*(X, S^n; f)) = 2 $ if and only if
there exists a pair $x, y \in H^{\leq n-2}(X; \Q)$ satisfying:
\begin{itemize} \item[(i)]  $xy \neq 0,$
\item[(ii)]    $xz \neq 0$ or $yz  \neq 0$ for   $z \in H^n(X; \Q) \implies H(f;\Q)(z) = 0  $
\hbox{and}
\item[(iii)]    $xy =wz$ for some $z \in H^n(X; \Q)$ and any $w    \implies H(f;\Q)(z) = 0  $
\end{itemize}
   Otherwise, $ \WL_\Q(\map_*(X, S^n; f)) =   1.$

   As for the free function space, if $H(f; \Q) =0$ then $\WL_\Q(\map(X, S^n;f))= 2$.
   Otherwise, $\WL_\Q(\map(X, S^n; f)) = \WL_\Q(\map_*(X, S^n; f)),$ as given above.
\end{theorem}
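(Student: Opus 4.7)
The plan is to reduce, via \thmref{thm:map*} and \thmref{thm:Y coformal}, to a cohomological analysis of derivation cycles in $\Der(\mathcal{L}_X, \mathcal{L}_{S^n}; \mathcal{L}_f)$. The two claims for the free function space follow directly from the discussion immediately preceding the theorem (the splitting produced by rational fibre-homotopical triviality when $H(f;\Q)=0$, and the identification of $\pi_*(\map(X,S^n;f))\otimes\Q$ as an abelian extension of the based Whitehead algebra by $\Q([\iota,\iota]_w)$ otherwise), so I concentrate on the based case. Since $S^n$ is formal with $n$ even, we may take $\mathcal{L}_{S^n}=(\L(u),0)$ with $|u|=n-1$ odd; then $\L(u)=\Q u\oplus \Q [u,u]$ because $[u,[u,u]]=0$. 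By \thmref{thm:Y coformal} the based Whitehead length is at most $2$, so the task is to decide between $1$ and $2$.

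Next I simplify the Whitehead product formula. For $D_{\mathcal{L}_f}$-cycles $\theta_a,\theta_b\in\Der(\mathcal{L}_X,\mathcal{L}_{S^n};\mathcal{L}_f)$ of degrees $p,q\geq 2$, the map $(\zeta^*_{a}\mid\zeta^*_{b})_{\!\mathcal{L}_f}$ kills $a$ and $b$ since $\chi_a^*=\chi_b^*=0$. Applying this to (\ref{eq:Wh prod on v}) annihilates the first two terms, leaving
$$\lbr \theta_a,\theta_b\rbr(v)=\pm\,(\zeta^*_{a}\mid\zeta^*_{b})_{\!\mathcal{L}_f}\big(\Theta_b\circ\Theta_a\circ\lambda(d_Xv)\big).$$
Expanding $\Theta_b\circ\Theta_a$ on the brackets appearing in $d_Xv$ via the Leibniz rule, and using $[u,[u,u]]=0$, the only surviving contributions come from brackets $[v_j,v_k]$ in $d_Xv$ for which both $\theta_a(v_j)$ and $\theta_b(v_k)$ (or vice versa) are proportional to $u$; each such term contributes a multiple of $[u,u]$.

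I then translate to cohomology via $V\cong s^{-1}\tilde H_*(X;\Q)$, so that $d_X$ dualizes to the cup product on $H^*(X;\Q)$. A cycle $\theta_a$ of degree $p\geq 2$ with $\theta_a(v_x)=u$ nontrivial singles out a class $x\in H^{n-p}(X;\Q)$ with $|x|\leq n-2$, and similarly $y$ for $\theta_b$. Nontriviality of $\lbr \theta_a,\theta_b\rbr$ as a cycle is equivalent to a nonzero coefficient of $[v_x,v_y]$ in $d_Xv_w$ for some $w$, which dually says $xy\neq 0$: condition (i). The cycle equation $\theta_a\circ d_X=0$ expands to brackets $[u,\mathcal{L}_f(v_z)]$ arising from $[v_x,v_z]$ with $|v_z|=n-1$; writing $\mathcal{L}_f(v_z)=\lambda_z u$ we have $\lambda_z\neq 0$ iff $H(f;\Q)(z)\neq 0$, and cancellation for every $v_w$ on which $xz$ contributes forces $\lambda_z=0$ whenever $xz\neq 0$. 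Applied symmetrically to $\theta_b$ this is condition (ii). For non-boundary, a primitive $\tau$ of degree $p+q$ would satisfy $D_{\mathcal{L}_f}(\tau)(v_w)=\pm\tau(d_Xv_w)$, and the only $[u,u]$-valued contributions come from brackets $[v_r,v_z]$ with $\tau(v_r)=u$ (so $|x_r|=n-p-q$) and $\mathcal{L}_f(v_z)=\lambda_zu\neq 0$, exactly corresponding to decompositions $xy=wz$ with $z\in H^n$ detected by $f$; ruling out such primitives is condition (iii). The main obstacle will be the careful bookkeeping in the cohomology translation when several basis classes of $V$ occur in a single degree: this gives more freedom in choosing $\theta_a,\theta_b$ and correspondingly more cycle and boundary constraints, and one must verify that (i)--(iii) remain exactly equivalent to $\WL_\Q(\map_*(X,S^n;f))=2$ once arbitrary primitives $\tau$ and arbitrary cycles $\theta_a,\theta_b$ are considered, not only those supported on the single classes $v_x,v_y$.
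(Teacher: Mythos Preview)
Your proposal is correct and follows essentially the same route as the paper: reduce the free case to the discussion preceding the theorem, bound $\WL_\Q\leq 2$ via coformality of $S^n$, then analyze cycles and boundaries in $\Der(\mathcal{L}_X,\L(u);\mathcal{L}_f)$ using (\ref{eq:Wh prod on v}) and the duality between the quadratic part of $d_X$ and the cup product. The paper carries out the bookkeeping you flag at the end by choosing a homogeneous basis $\{v_1,\dots,v_s\}$ of $V$ so that $\mathcal{L}_f$ is supported on a single $v_k\in V_{n-1}$, and then, given a cycle $\theta_a$ nonzero on $V_{n-1-p}$, re-choosing the basis in that degree so that $\theta_a(v_l)=\delta_{il}u$; this reduces the general case to derivations supported on single basis vectors, exactly the $x,y$ in your translation.
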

\begin{proof}
The results for the free function space follow from the discussion preceding the
statement of the theorem.  Thus we focus on the based function space and
so  the space $H_*(\Der(\mathcal{L}_X, \mathcal{L}_{S^n}; \mathcal{L}_f))$
 with Whitehead product given by \corref{cor:Wh based}.
 Write $\mathcal{L}_X = \L(V; d_X)$ and $\mathcal{L}_{S^n} = \L(u; 0).$
Given a homogeneous basis $\{v_1,  \ldots, v_s\}$ for $V = s^{-1}\widetilde{H}^*(X; \Q)$, we will assume the vectors are in nondecreasing order of degree.   If $H(f; \Q) \neq 0,$ we will  further assume that there is some basis vector $v_k \in V_{n-1}$  such that   $\mathcal{L}_f(v_k) = c_ku$
while $\mathcal{L}_f(v_i) =0 $ for any other basis element $v_i$ of degree $n-1.$ Here
$c_k \neq 0$.  For convenience, we allow the case  $c_k =0$ so that  $H(f; \Q) =0$  if and only if  $c_k =0.$

We recall the quadratic part of the differential $d_X$ is dual to the cup product in
$H^*(X;\Q)$  (see \cite[Sec.I.1.(10)]{Tan} and \cite[Sec.22e]{F-H-T}).
 Let $ \{x_1, \ldots, x_s\}$ be the corresponding additive basis of $\widetilde{H}^*(X; \Q)$, that is, $x_i = s(v_i).$
Given  any  $v \in V$
 we may write
\begin{equation} \label{eq:quad} d_X(v)  = \sum_{i \leq j}c_{ij}(v)[v_i, v_j] + \hbox{\, longer length terms}\end{equation}
with $c_{ij}(v) \in \Q$ and $c_{ii}(v) = 0$ for $v_i$ of even degree.
As a direct consequence of this duality we have that the cup product $x_i x_j = 0$ if and only if $c_{ij}(v) = 0$ for all $v \in V.$ We make use of this repeatedly below.

Let $\theta \in \Der_p(\mathcal{L}_X,\mathcal{L}_{S^n}; \mathcal{L}_f)$.
 Using (\ref{eq:quad}), we have
$$ D_{\mathcal{L}_f}(\theta)(v) =  \pm
\theta(d_X v) =  \pm  \sum
_{i\leq k} c_{ik}(v)[\theta(v_i), \mathcal{L}_f(v_k)]=  \pm   \sum_{i\leq k} c_k
 c_{ik}(v)[ \theta(v_i),u]. $$
It follows that $\theta$ is a cycle if $\theta(v_i) = 0$  for all $v_i$ in $V_{n-1-p}.$
If $\theta(v_i) \neq 0$ for some  $v_i \in V_{n-1-p}$ we may   alter our basis  in this degree so
that $\theta(v_j) = \delta_{ij}u$   for   $v_j \in V_{n-1-p}$ where $\delta_{ij}$ is the Kronecker delta function.
Then we see   $\theta$ is a   cycle if and only if  $c_kc_{ik}(v) = 0$  for all $v \in V.$
Translating, we have shown a derivation  $\theta \in \Der_{p}(\mathcal{L}_X, \mathcal{L}_{S^n}; \mathcal{L}_f)$ not vanishing on $V_{n-1-p}$
is a cycle if and only if there exists $x \in H^{\leq n-2}(X; \Q)$ such that
$xz \neq 0$ implies  $ H(f; \Q)(z) = 0$ for all $z \in H^n(X; \Q).$

Next let $\theta_a, \theta_b \in \Der(\mathcal{L}_X,\mathcal{L}_{S^n}; \mathcal{L}_f)$
be cycles   of degree $p$ and $q$, respectively.   As in the discussion  preceding \corref{cor:Wh
based}, let $\zeta^*_x = (0, \theta_x) \in
\Rel(\ad_{\mathcal{L}_f}),$ $x =a, b,$ be the corresponding cycles  so that, by (\ref{eq:Wh Relad*}),
the derivation cycle
$$ \lbr \theta_a, \theta_b \rbr = \induceddd \circ \{\Theta_a,
\Theta_b \} \circ \lambda
 \in \Der_{p+q-1}(\mathcal{L}_X, \mathcal{L}_{S^n}; \mathcal{L}_f)$$
represents  $[\langle \theta_a\rangle, \langle \theta_b \rangle ]_w.$
Using    (\ref{eq:quad}) again, (\ref{eq:Wh prod on v})  and the fact that
 $\induceddd(x)   = 0$ for $x = a,b$,  we obtain
\begin{align*}  \lbr \theta_a, \theta_b \rbr(v)  & = \pm \induceddd \circ \Theta_b
\circ \Theta_a \circ \lambda (d_X v) \\
 & =  \sum_{i<j}\pm c_{ij}(v)\left( [\theta_a(v_i),
\theta_b(v_j)] \pm [\theta_b(v_i),
\theta_a(v_j)] \right) \pm \sum_i 2c_{ii}(v)[\theta_a(v_i), \theta_b(v_i)].
\end{align*}
From this we conclude that $\lbr \theta_a, \theta_b \rbr$   nonvanishing for derivation cycles $\theta_a, \theta_b$  implies $\theta_a(v_i)\neq 0$ for some $v_i \in V_{n-1-p}$ and $\theta_b(v_j) \neq 0$  for some $v_j \in V_{n-1-q}$ and for this $i, j$ we have $c_{ij}(v) \neq 0$ for some $v \in V.$    By the above computation, the fact that $\theta_a$ and $\theta_b$ are cycles implies  $c_k c_{ik}(w) =  c_kc_{jk}(w) = 0$ for all $w \in V.$
Conversely, suppose there exist indices $i, j$ such that $c_{ij}(v) \neq 0$ and $c_k c_{ik}(w) = c_kc_{jk}(w) = 0$ for all $w \in V.$ Then   define $\theta_a, \theta_b$
by setting $\theta_a(v_l) = \delta_{li}u$ and $\theta_b(v_l) = \delta_{lj}u$ and extend  by the $\mathcal{L}_f$-derivation law.
  By the preceding paragraph,  the $\theta_a$ and $\theta_b$ are derivation cycles. Computing as   above $\lbr \theta_a, \theta_b \rbr(v) = \pm c_{ij}[u,u]$ is non-vanishing.   Combining and translating to cohomology, we have shown   that there exists a nontrivial pairing $\lbr \theta_a, \theta_b \rbr \in
\Der_{p+q-1}(\mathcal{L}_X, \mathcal{L}_{S^n}; \mathcal{L}_f)$ for cycles $\theta_a \in
\Der_{p}(\mathcal{L}_X, \mathcal{L}_{S^n}; \mathcal{L}_f)$ and
 $\theta_b  \in \Der_{ q}(\mathcal{L}_X, \mathcal{L}_{S^n}; \mathcal{L}_f)$ if and only
if there exists a pair $x, y \in H^{\leq n-2}(X; \Q)$  satisfying (i) and (ii).

Finally,  suppose  $\theta_a, \theta_b \in \Der(\mathcal{L}_X,\mathcal{L}_{S^n}; \mathcal{L}_f)$
are cycles   of degree $p$ and $q$ with $\lbr \theta_a, \theta_b \rbr \neq 0$.   As
above, let $v_i \in V_{n-1-p}$ and $v_j \in V_{n-1-q}$ be   basis elements
so that
$\theta_a(v_i) \neq 0$ and $\theta_b(v_j) \neq 0.$  If $p \neq q$ we  arrange our basis in degree $n-1-p$ and $n-1-q$ so that
 $\theta_a(v_l)  = \delta_{il}u$  and $\theta_b(v_m) =\delta_{jm}u$ for $v_l \in V_{n-1-p}$ and $v_m \in V_{n-1-q}.$
If $p= q$,  we  must allow for the case $v_i = v_j$.  In this case, we may arrange the basis in degree $n-1-p$
so that $\theta_a(v_l)  = c_a \delta_{il}u$  and $\theta_b(v_m) =c_b \delta_{jm}u$
for $v_l, v_m \in V_{n-1-p}.$ Here $c_a, c_b \neq 0$ and can be taken to be $1$ when $i \neq j$.  We  use this identification
in all cases by taking  $c_a= 1$ and $c_b = 1$ except, perhaps, when $i =j$.

 Let  $x = s(v_i) \in H^{n-p}(X; \Q)$   and $y = s(v_j) \in H^{n-q}(X; \Q)$ be the corresponding cohomology elements.
Then the pair $x, y$  satisfy (i) and (ii).  We show $\lbr \theta_a, \theta_b \rbr
\in \Der_{p+q-1}(\mathcal{L}_X, \mathcal{L}_{S^n}; \mathcal{L}_f)$ bounds if and only if the pair $x,y$ violates (iii).
 Since $ \lbr \theta_a, \theta_b \rbr$ is nonvanishing, the preceding discussion shows
there is  a vector $v \in V_{2n-1-p-q}$ with $$\lbr \theta_a, \theta_b \rbr(v) = \pm c_a c_b c_{ij}(v) [u, u] \neq 0.$$
Suppose
$\lbr \theta_a, \theta_b \rbr =   D_{\mathcal{L}_f}(\theta)$
for some $\theta \in \Der_{p+q}(\mathcal{L}_X, \mathcal{L}_{S^n}; \mathcal{L}_f).$
Applying this to $v \in V$ using (\ref{eq:quad}) we obtain
$$ \pm c_a c_b c_{ij}(v) [u, u] = \lbr \theta_a, \theta_b\rbr(v) =  D_{\mathcal{L}_f}(\theta)(v) = \pm\theta(d_X v)
= \pm \sum_{r < k}c_kc_{rk}(v)[\theta(v_r), u].$$
We conclude that if $\lbr \theta_a, \theta_b \rbr$ is a nonvanishing boundary then there is $v \in V$ with
$c_{ij}(v) \neq 0$ and $c_kc_{rk}(v) \neq 0$ for some $r$ which directly translates  to imply the pair $x,y$ violates
(iii) with $w= s(v_r)$ and $z = s(v_k)$.
Conversely, if $x,y$ violate (iii) then $c_k \neq 0$ and there exists some $v \in V_{2n-1-p-q}$ such that
  $c_{ij}(v) \neq 0$  and $c_{rk}(v) \neq 0$   for some index $r$.  Notice that  $v_r \in V_{n-1-p-q}.$
Define a derivation $\theta \in \Der_{p+q}(\mathcal{L}_X, \mathcal{L}_{S^n}; \mathcal{L}_f)$ by setting $\theta(v_l) = \delta_{lr}u$ and
 extending.   We then see $D_{\mathcal{L}_f}
(\theta)(v) = \pm c_k c_{rk}(v)[u, u] \neq 0$ while
$\lbr \theta_a, \theta_b \rbr (v) = \pm    c_a c_b c_{ij}(v)[u,u] \neq 0.$
To complete the proof, we show   the derivations $D_{\mathcal{L}_f}(\theta)$
and $\lbr \theta_a, \theta_b \rbr$  differ by a constant.  Note that both derivations increase bracket length. This implies they both vanish on $V$ except in degree $2n-1-p-q.$
In this degree, they are linear maps $V_{2n-1-p-q} \to \Q([u,u])$. We have shown both are  nonzero on a particular vector $v \in V_{2n-1-p-q}.$
Since the target is one-dimensional, there is a constant $c \neq 0$
such that
$\lbr \theta_a, \theta_b \rbr =  cD_{\mathcal{L}_f}(\theta) = D_{\mathcal{L}_f}(c\theta)$, as needed.
\end{proof}

We conclude with an example realizing the inequality
$$\WL_\Q(\map(X, Y; f)) > \WL_\Q(\map(X, Y; 0))  = \WL_\Q(Y)$$ for
some map $f \colon X \to Y.$
\begin{example} \label{ex:nonabelian}
Let $X = S^3$, and let $Y$ be a space with Sullivan minimal model
$\Lambda(x_1, x_2, x_3, y; d),$ the free DG  algebra with generators
of degrees $|x_1| = 2, |x_2| = |x_3| = 3,$ and $|y| = 7$.  Define
the (degree $+1$) differential here by setting $d(x_i) = 0$ and
$d(y_7) = x_1x_2x_3.$  Then  $Y$ has vanishing Whitehead products
since $d$ has no quadratic term \cite[Prop.13.16]{F-H-T}.
Consequently, $\WL_\Q(\map(X, Y;0)) = 1$ by \thmref{thm:WL null
component}. The Quillen minimal model $(\mathcal{L}_Y, d_Y)$ for $Y$
is of the form $\L(W; d_Y)$ where $W = s^{-1}\widetilde{H}_*(Y,
\Q)$. We use that the quadratic part of the differential $d_Y$ is
dual to the cup-product in $H^*(Y, \Q)$ (see
\cite[Sec.22(e)]{F-H-T}). In low degrees, we see $W$ contains
elements $w_1, w_2, w_3, w_{1,1}, w_{1,2}, w_{1,3}, w_{2,3}$ with
$|w_1| = 1, |w_2| = |w_3| = 2, |w_{1,1}| = 3, |w_{1,2}| = |w_{1, 3}|
= 4$ and $|w_{2,3}| = 5.$ Here $w_i$ corresponds to $x_i$ and
$w_{i,j}$ to the cup-product $x_i \cdot x_j$.  We may write the
differential as
$$ d_Y(w_1) = d_Y(w_2) = d_Y(w_3) = 0 \hbox{ \ \ with  \ \ }
  d_Y(w_{1,1}) = \frac{1}{2} [w_1, w_1],
$$ $$
 d_Y(w_{1, 2}) = [w_1, w_2], \,
d_Y(w_{1, 3}) = [w_1, w_3] \hbox{ \ \ and \ \ } d_Y(w_{2, 3}) =
[w_2, w_3]$$ on these generators.
 Write
the Quillen minimal model for $S^3$ as $\L(v; 0)$ with $v$ in degree
$2$ and let $f \colon S^3 \to Y$ correspond, after rationalization,
to the class $w_3 \in H_2(\mathcal{L}_Y).$ That is,
$\mathcal{L}_f(v) = w_3.$ We show that $\WL_\Q(\map(S^3, Y; f)) \geq
2 .$  Observe that an element $\zeta_a = (\chi_a, \theta_a) \in
\Rel_p(\ad_{\mathcal{L}_f})$ is a
$\delta_{\ad_{\mathcal{L}_f}}$-cycle if $d_Y(\chi_a) = 0$ and
$d_Y(\theta_a(v)) = -[\chi_a, w_3].$ Thus $\zeta_a = (w_1,
\theta_a)$ and $\zeta_b = (w_2, \theta_b)$ are
$\delta_{\ad_{\mathcal{L}_f}}$-cycles of degree $2$ and $3$
respectively where $\theta_a(v) = -w_{1,3}$ and $\theta_b(v) =
-w_{2,3}.$ Write $\alpha \in \pi_2(\map(X, Y;f)) \otimes \Q$ and
$\beta \in \pi_3(\map(X, Y;f)) \otimes \Q$ for the corresponding
homotopy elements as in \secref{sec:formula}. Applying
\thmref{thm:map}, their Whitehead product $[\alpha, \beta]_w \in
\pi_4(\map(X, Y;f)) \otimes \Q$ corresponds to the class represented
by the $\delta_{\ad_{\mathcal{L}_f}}$-cycle $\lbr \zeta_a, \zeta_b
\rbr = ([w_1, w_2], \{ \zeta_a, \zeta_b \}) \in
\Rel_4(\ad_{\mathcal{L}_f})$; where, by using formula (\ref{eq:Wh
prod on v}), we have that $\{ \zeta_a, \zeta_b \}(v) = - [w_2,
w_{1,3}] - [w_1, w_{2, 3}].$   This cannot be a boundary.  For if
 $\delta_{\ad_{\mathcal{L}_f}}(\eta, \theta)
= ([w_1, w_2], \{ \zeta_a, \zeta_b \})$, then we have $\eta = -
w_{1,2} + \chi$ for some $\chi$ a cycle in $\mathcal{L}_Y$ of degree
$4$. Since $Y$ has no rational homotopy of degree $5$ (direct from
the Sullivan model), we see that $\chi = d_Y(\xi)$ for some $\xi \in
\mathcal{L}_Y$. Further, we then obtain that
$$\ad_{\mathcal{L}_f}(- w_{1,2} + d_Y(\xi)) (v) +
D_{\mathcal{L}_f}(\theta)(v) = \{ \zeta_a, \zeta_b \}(v),$$ which
implies that
$$d_Y([\xi, w_3] + \theta(v)) = [w_{1,2}, w_3]  - [w_1, w_{2,3}] - [w_2, w_{1,3}].$$
However, when $d_Y$ is applied to this latter term, it yields $2 [
[w_1, w_2], w_3]$ and not zero, so it cannot be a cycle (boundary).
 We
conclude that $[\alpha, \beta]_w \neq 0.$
\end{example}


\begin{thebibliography}{10}

\bibitem{B-G}
I.~Berstein and T.~Ganea, \emph{Homotopical nilpotency}, Illinois J.
Math.
  \textbf{5} (1961), 99--130. \MR{0126277 (23 \#A3573)}

\bibitem{B-S97b}
E.~H. Brown, Jr. and R.~H. Szczarba, \emph{Some algebraic
constructions in
  rational homotopy theory}, Topology Appl. \textbf{80} (1997), no.~3,
  251--258. \MR{1473920 (98h:55017)}
\bibitem{BFM} U.~Buijs, Y. F\'{e}lix and A. Murillo, {\em Lie models for the components
of sections of nilpotent fibrations,} to appear in Trans. Amer. Math. Soc.


\bibitem{B-M}
U.~Buijs and A.~Murillo, \emph{The rational homotopy {L}ie algebra
of function
  spaces}, to appear in Comment. Math. Helv.

\bibitem{F-H-T}
Y.~F{\'e}lix, S.~Halperin, and J.-C. Thomas, \emph{Rational homotopy
theory},
  Graduate Texts in Mathematics, vol. 205, Springer-Verlag, New York, 2001.
  \MR{2002d:55014}

\bibitem{Gan60}
T.~Ganea, \emph{Lusternik-{S}chnirelmann category and cocategory},
Proc. London
  Math. Soc. (3) \textbf{10} (1960), 623--639. \MR{0126278 (23 \#A3574)}

\bibitem{G-M}
P.~A. Griffiths and J.~W. Morgan, \emph{Rational homotopy theory and
  differential forms}, Progress in Mathematics, vol.~16, Birkh{\"a}user Boston,
  Mass., 1981.

\bibitem{LPSS}
G.~Lupton, N.~C. Phillips, C.~S. Schochet, and S.~B. Smith,
\emph{Banach
  algebras and rational homotopy theory},  Trans. Amer. Math. Soc.
{\bf 361} no.~1 (2009), 267-295.
\bibitem{L-S3}
G.~Lupton and S.~B. Smith, \emph{Criteria for components of a
function space to
  be homotopy equivalent},   Math. Proc. Camb. Philos. Soc.
 {\bf 145}  (2008),  no.~1, 95--106.
 \MR{2431641}
\bibitem{L-S2}
\bysame, \emph{Rationalized evaluation subgroups of a map {II}:
{Q}uillen
  models and adjoint maps}, J. Pure Appl. Algebra \textbf{209} (2007), no.~1,
  173--188. \MR{2292125 (2008c:55018)  }

\bibitem{L-S1}
\bysame, \emph{Cyclic maps in rational homotopy theory}, Math. Z.
\textbf{249}
  (2005), no.~1, 113--124. \MR{2106973 (2005h:55010)}



\bibitem{M-R}
J.~M{\o}ller and M.~Raussen, \emph{Rational homotopy of spaces of
maps into
  spheres and complex projective spaces}, Trans. Amer. Math. Soc. \textbf{292}
  (1985), no.~2, 721--732. \MR{0808750 (86m:55019)}

\bibitem{Q}
D.~Quillen, \emph{Rational homotopy theory}, Ann. Math. \textbf{90}
(1969),
  no.~2, 205--295. \MR{0258031 (41 \# 2678)}

\bibitem{Sal}
P.~Salvatore, \emph{Rational homotopy nilpotency of
self-equivalences},
  Topology Appl. \textbf{77} (1997), no.~1, 37--50. \MR{1443426 (98d:55011)}

\bibitem{Tan}
D.~Tanr{\'e}, \emph{Homotopie rationnelle: mod\`eles de {C}hen,
{Q}uillen,
  {S}ullivan}, Lecture Notes in Mathematics, vol. 1025, Springer-Verlag,
  Berlin, 1983. \MR{0672424 (83m:55017)}

\bibitem{V}
M.~Vigu{\'e}-Poirrier, \emph{Sur l'homotopie rationnelle des espaces
  fonctionnels}, Manuscripta Math. \textbf{56} (1986), no.~2, 177--191.
  \MR{850369 (87h:55009)}

\bibitem{GW}
G.~W. Whitehead, \emph{Elements of homotopy theory}, Graduate Texts
in
  Mathematics, vol.~61, Springer-Verlag, New York, 1978. \MR{516508
  (80b:55001)}

\end{thebibliography}


\providecommand{\bysame}{\leavevmode\hbox
to3em{\hrulefill}\thinspace}
\providecommand{\MR}{\relax\ifhmode\unskip\space\fi MR }
\providecommand{\MRhref}[2]{%
  \href{http://www.ams.org/mathscinet-getitem?mr=#1}{#2}
} \providecommand{\href}[2]{#2}

\end{document}